\documentclass[12pt]{article}
\usepackage{amsmath,amsopn,amssymb,amsthm,amsfonts}
\usepackage[T2A]{fontenc}
\usepackage[cp1251]{inputenc}
\usepackage{longtable}

\textwidth=17cm \oddsidemargin=0.5cm \topmargin=-1cm
\textheight=24cm

\newtheorem{theorem}{Theorem}[section]
\newtheorem{corollary}[theorem]{Corollary}
\newtheorem{proposition}[theorem]{Proposition}
\newtheorem{definition}[theorem]{Definition}
\newtheorem{lemma}[theorem]{Lemma}

\newcommand\g{{\mathfrak g}}
\newcommand\h{{\mathfrak h}}
\newcommand{\R}{\mathbb{R}}

\begin{document}
{\bf \large
\centerline{N.~K.~Smolentsev}

\vspace{3mm}
\centerline{Invariant pseudo-Sasakian and $K$-contact structures on}
\centerline{seven-dimensional nilpotent Lie groups}}
\vspace{3mm}

\begin{abstract}
We study the question of the existence of left-invariant Sasaki contact structures on the seven-dimensional nilpotent Lie groups. It is shown that the only Lie group allowing Sasaki structure  with a positive definite metric tensor is the Heisenberg group. We find a complete list of the 22 classes of seven-dimensional nilpotent Lie groups which admit pseudo-Sasaki structure. We also present a list of 25 classes of seven-dimensional nilpotent Lie groups admitting a $K$-contact structure, but not the pseudo-Sasaki structure. All the contact structures considered are central extensions of six-dimensional nilpotent symplectic Lie groups and are established formulas that connect the geometrical characteristics of the six-dimensional nilpotent almost pseudo-K\"{a}hler Lie groups and seven-dimensional nilpotent contact Lie groups. It is known that for the six-dimensional nilpotent pseudo-K\"{a}hler Lie groups the Ricci tensor is always zero. Unlike the pseudo-K\"{a}hlerian case, it is shown that on contact seven-dimensional algebras the Ricci tensor is nonzero even in directions of the contact distribution.  \end{abstract}

\section{Introduction} \label{Introduction}
A left-invariant \emph{K\"{a}hler} structure on a Lie group $H$ is a triple $(h,\omega,J)$, consisting of a left-invariant Riemannian metric $h$, a left-invariant symplectic form $\omega$ and an orthogonal left-invariant complex structure $J$, where $h(X,Y) = \omega(X,JY)$ for any left-invariant vector fields $X$ and $Y$ on $H$.
Therefore, such a structure on the group $H$ can be defined as a pair $(\omega, J)$, where $\omega$ is the symplectic form, and $J$ is a complex structure compatible with $\omega$, i.e., such that $\omega(JX,JY) =\omega(X,Y)$.
If $\omega(X,JX) > 0$, $\forall X \neq 0$, it becomes a K\"{a}hler metric, and if the positivity condition is not met, then $h(X,Y) = \omega(X,JY)$ is a pseudo-Riemannian metric.
Then, $(h, \omega, J)$ is called the \emph{pseudo-K\"{a}hler structure} on the Lie group $H$.
The left-invariance of these objects implies that the (pseudo) K\"{a}hler structure $(h,\omega,J)$ can be defined by the values of the $\omega$, $J$ and $h$ on the Lie algebra $\h$ of the Lie group $H$. Then, the $(\h, \omega, J, h)$ is called a \emph{pseudo-K\"{a}hler Lie algebra}.
Conversely, if $(\h, J, h)$ is a Lie algebra endowed with a complex structure $J$, orthogonal with respect to the pseudo-Riemannian metric $h$, then the equality $\omega(X,Y) = h(JX,Y)$ determines the (fundamental) two-form $\omega$, which is closed if and only if the $J$ is parallel \cite{KN}.

Classification of the six-dimensional real nilpotent Lie algebras admitting invariant complex structures and estimation of the dimensions of moduli spaces of such structures is obtained in \cite{Sal-1}. In article \cite{Goze-Khakim-Med}, classification is obtained of symplectic structures on six-dimensional nilpotent Lie algebras.
The condition of existence of left-invariant positive definite K\"{a}hler metric on the Lie group $H$ imposes strong restrictions on the structure of its Lie algebra $\h$. For example, Benson and Gordon have shown \cite{BG} that this Lie algebra cannot be nilpotent except in the abelian case. The nilpotent Lie groups and nilmanifolds (except tora) do not allow the left-invariant K\"{a}hler metrics, however,  such manifolds may exist as a pseudo-left-invariant K\"{a}hler metric. The article in \cite{CFU2} provides a complete list of the six-dimensional nilpotent Lie algebras admitting pseudo-K\"{a}hler structures. A more complete study of the properties of the curvature of pseudo-K\"{a}hler structures is carried out in \cite{Smolen-11}.

The analogues of symplectic structures in an odd case are contact structures \cite{Blair}.
It is known \cite{Goze-Khakim-Med} that the contact Lie algebra $\g$ 
with nontrivial center
is a central extension of symplectic Lie algebras $(\h,\omega)$ with the help of a non-degenerate cocycle $\omega$.
In this case, a contact Lie algebra $\g$ admits Sasakian structure only if the Lie algebra $(\h,\omega)$ admits a K\"{a}hler metric.
Therefore, the question of the existence of Sasakian structures on the seven-dimensional nilpotent contact Lie algebra $\g$ is reduced to the question of the existence of K\"{a}hler structures on six-dimensional nilpotent symplectic Lie algebras $\h = \g/Z(\g)$, where $Z(\g)$ is the center of contact Lie algebra $\g$.
The classification of seven-dimensional nilpotent contact Lie algebras is obtained in \cite{Kutsak}.
The authors of \cite{Cappelletti-14} found examples of $K$-contact but not Sasakian structures on the seven-dimensional nilpotent contact Lie algebras. Just as in the even-dimensional nilpotent case, there are topological obstructions to the existence of Sasakian structures on nilpotent Lie algebra, based on the strong Lefschetz theorem for contact manifolds \cite{Cappelletti-13}.

In this paper, we show that on seven-dimensional nilpotent Lie algebras the invariant contact Sasakian structures exist only in the case of the Heisenberg algebra, and pseudo-Sasakian (i.e., having a pseudo-Riemannian metric tensor) there are only 22 classes of central extensions of six-dimensional pseudo K\"{a}hler nilpotent Lie algebras. This article provides a comprehensive list of such pseudo-Sasakian structures and their curvature properties are investigated. We also obtain the list of 25 classes of seven-dimensional nilpotent contact Lie algebras that allow $K$-contact structure, but do not allow pseudo-Sasakian structures.

\section{Preliminaries} \label{Preface}
The basic concepts of contact manifolds and contact Lie algebra are summarized here.

\subsection{Contact manifolds} \label{Cont-Manif}
In this section, we recall some basic definitions and properties in contact Riemannian geometry. For further details we refer the reader to the monographs in \cite{Blair}.
Let $M$ be a smooth manifold of dimension $2n+1$. A 1-form $\eta $ on $M$ is called a \emph{contact form} if $\eta \wedge (d\eta)^n \ne 0$ is a volume form. Then the pair $(M,\eta)$ is called a (strict) \emph{contact manifold}. In any contact manifold one proves the existence of a unique vector field $\xi$, called the \emph{Reeb vector field}, satisfying the properties $\eta(\xi) =1$ and $d\eta(\xi,X) =0$ for all vector fields $X$ on $M$.

The contact form $\eta$ on $M$ determines the distribution of $D =\{X \in TM\, |\, \eta(X) = 0\}$ of dimension $2n$, which is called the \emph{contact distribution}. It is easy to see that $L_\xi\eta = 0$. If $M$ is a contact manifold with a contact form $\eta$, then the \emph{contact metric structure} is called the quadruple $(\eta, \xi, \phi, g)$, where $\xi$  is the Reeb field, $g$ is the Riemannian metric on $M$ and $\phi$ is the affinor on $M$, and for which there are the following properties \cite{Blair}:
\begin{enumerate}
  \item $\phi^2 = -I +\eta\otimes \xi$;
  \item $d\eta(X,Y) = g(X,\phi Y)$;
  \item $g(\phi X,\phi Y) = g(X,Y) -\eta(X)\eta(Y)$,
\end{enumerate}
where $I$ is the identity endomorphism of the tangent bundle. The Riemannian metric $g$ is called the contact metric structure \emph{associated} with the contact structure $\eta$. From the third property it follows immediately that the associated metric $g$ for the contact structure $\eta$ is completely determined by the affinor $\phi$: $g(X,Y) =d\eta(\phi X,Y) + \eta(X)\eta(Y)$.

The contact metric manifold whose Reeb vector field $\xi$ is Killing, $L_\xi g =0$, is called \emph{$K$-contact} \cite{Blair}. This last property is equivalent to the condition $L_\xi \phi =0$. Recall from \cite{Blair} that an \emph{almost contact} structure on a manifold $M$ is a triple $(\eta, \xi, \phi)$, where $\eta$ is a 1-form, $\xi$ is a vector field and $\phi$ is an affinor on $M$ with properties: $\eta(\xi) =1$ and $\phi^2 = -I +\eta\otimes \xi$.

Suppose that $M$ is an almost contact manifold. Consider the direct product $M\times \R$. A vector field on $M\times \R$ is represented in the form $(X, f{d}/{dt})$, where $X$ is the tangent vector to $M$, $t$ is the coordinate on $\R$ and $f$ is a function of class $C^\infty$ on $M\times \R$. We define an almost complex structure $J$ on the direct product $M\times \R$ as follows \cite{Blair}: $J(X, f\,d/dt) = (\phi X - f\xi, \eta(X)\,d/dt)$.

An almost contact structure $(\eta, \xi, \phi)$ is called \emph{normal} if the almost complex structure $J$ is integrable. A four tensor $N^{(1)}$, $N^{(2)}$, $N^{(3)}$ and $N^{(4)}$ is defined \cite{Blair} on an almost contact manifold $M$ by the following expressions:
$$
N^{(1)}(X,Y) = [\phi,\phi](X,Y) +d\eta(X,Y)\xi, \quad  N^{(2)}(X,Y) =(L_{\phi X} \eta)(Y) -(L_{\phi Y} \eta)(X),
$$
$$
N^{(3)}(X,Y) = (L_\xi \phi)X, \quad N^{(4)}(X,Y) = (L_\xi \eta)(X).
$$

An almost contact structure $(\eta, \xi, \phi)$ is a normal, if these tensors vanish. However, it can be shown from the vanishing tensor $N^{(1)}$ that the remaining tensors $N^{(2)}$, $N^{(3)}$ and $N^{(4)}$ also vanish. Therefore, the condition of normality is only the following: $N^{(1)}(X,Y) = [\phi,\phi](X,Y) +d\eta(X,Y)\xi =0$. Thus, a \emph{Sasaki manifold} is a normal contact metric manifold.

\begin{definition}  \label{Pseudo-Sasac}
A pseudo-Riemannian contact metric structure $(\eta, \xi, \phi, g)$ is called a $K$-contact, if the vector field $\xi$  is a Killing vector field. A pseudo-Riemannian contact metric structure $(\eta, \xi, \phi, g)$ is called pseudo-Sasakian if $N^{(1)}(X,Y) = 0$.
\end{definition}

\textbf{Remark.}
In this paper, we assume that the exterior product and exterior differential is determined without a normalizing factor. Then, in particular, $dx\wedge dy = dx\otimes dy -dy\otimes dx$ and $d\eta (X,Y) = X\eta(Y) -Y\eta(X) -\eta([X,Y])$. We also assume that the curvature tensor $R$ is defined by the formula: $R(X,Y)Z = \nabla_X \nabla_Y Z -\nabla_Y \nabla_X Z -\nabla_{[X,Y]} Z$. The Ricci tensor $Ric$ is defined as a contraction of the curvature tensor in the first and the fourth (top) indices: $Ric_{ij}= R_{kij}^k$. In \cite{Blair} Blair shows a different formula for the tensor $N^{(1)}$: $N^{(1)}(X,Y) = [\phi,\phi](X,Y) + 2d\eta (X,Y)\xi$.
In \cite{Blair}, a different formula is used for the exterior derivative $d_0\eta(X,Y) = \frac 12  (X\eta(Y) -Y\eta(X) -\eta([X,Y]))$, which corresponds to the selection of wedge product 1-forms as $dx\wedge dy = \frac 12(dx\otimes dy -dy\otimes dx)$.
This is evident then in determining the associated metrics $g(X,Y) = d\eta (\phi X,Y) +\eta(X)\eta(Y)$.
In our case, the associated Riemannian metric $g$ will be different along the contact distribution $D$ from of the associated Riemannian metric $g_0$ adopted in \cite{Blair}: if $X,Y \in D$, then $g(X,Y) =2g_0(X,Y)$. It further appears that the formula $\nabla_X \xi = -\phi X$ in our case looks like: $\nabla_X \xi = -\frac 12 \phi X$.
In addition in \cite{Blair}, the sectional curvature for the $K$-contact metric manifold in the direction of the 2-plane, containing $\xi$, is equal to 1 (p. 113, Theor. 7.2), and in our case, the sectional curvature is equal to $\frac 14$. Consequently this changes the formula for the Ricci curvature, $Ric(\xi, \xi) = n/2$.

When a manifold is taken Lie group $G$, it is natural to consider the left-invariant contact structure. In this case, the contact form $\eta$, Reeb vector field $\xi$, affinor $\phi$ and associated metric $g$ are defined by its values in the unit $e$, that is, on the Lie algebra $\g$. We will call the $\g$ a \emph{contact Lie algebra} if it is defined in a contact form $\eta\in \g^*$ and the vector $\xi\in \g$, such that $\eta \wedge (d\eta)^n \ne 0$, $\eta(\xi)=1$ and $d\eta(\xi,X)=0$. Note that $d\eta(x,y)=-\eta([x,y])$. In a similar sense, it is considered a \emph{contact metric} Lie algebra, \emph{symplectic} Lie algebra, \emph{pseudo-K\"{a}hler} Lie algebra, \emph{(pseudo) Sasakian} Lie algebra, etc.

Let $\nabla$ be the Levi-Civita connection corresponding to the (pseudo) Riemannian metric $g$. It is determined from the six-membered formula \cite{KN}, which for the left-invariant vector fields $X,Y,Z$ on the Lie group takes the form: $2g(\nabla_X Y,Z) = g([X,Y],Z) +g([Z,X],Y) +g(X,[Z,Y])$.
Recall also that if $R(X,Y)Z$ is the curvature tensor, then the Ricci tensor $Ric(X,Y)$ for the pseudo-Riemannian metric g is defined by the formula:
$$
Ric(X,Y) = \sum_{i=1}^{2n+1} \varepsilon_i g(R(e_i,Y)Z, e_i),
$$
where $\{e_i\}$ is a orthonormal frame on $\g$ and $\varepsilon_i =g(e_i, e_i)$.

\subsection{Contact structures on central extensions} \label{Cont-Alg-Lie}
Contact Lie algebra can be obtained as a result of the \emph{central expansion} of the symplectic Lie algebra $\h$. Recall this procedure. If there is a symplectic Lie algebra $(\h,\omega)$, the central extension $\g = \h \times_\omega\R$ is a Lie algebra in which the Lie brackets are defined as follows: $[X, \xi]_{\g} = 0$, $[X, Y]_{\g} = [X, Y]_{\h} +\omega(X, Y)\xi$ for any $X,Y \in \h$, where $\xi = d/dt$ is the unit vector in $\R$.

On the Lie algebra $\g = \h \times_\omega \R$ contact form given by the form $\eta = \xi^*$, and $\xi = d/dt$ is the Reeb field.
If $x = X + \lambda\xi$ and $y = Y + \mu\xi$, where $X,Y \in \h$, $\lambda, \mu \in \R$, then: $d\eta (x,y) = -\eta([x, y]) = -\xi^*([X,Y]_{\h}+\omega(X,Y)\xi) = -\omega(X,Y)$.

As is known, the isomorphism classes of the central extensions of the Lie algebra $\h$ are in one-to-one correspondence with the elements of $H^2(\h,\R)$. Non-degenerate elements of $H^2(\h,\R)$ (the symplectic Lie algebra) define the contact structures on $\h\times_\omega\R$.

To define the affinor $\phi$ on $\g =\h\times_\omega\R$ we can use an almost complex structure $J$ on $\h$ as follows: if $x =X +\lambda\xi$, where $X\in \h$, then $\phi(x) = JX$.
If this almost complex structure $J$ on $\h$ is also compatible with the $\omega$, that is, it has the property of $\omega(JX,JY) =\omega(X,Y)$, we will get the contact (pseudo) metric structure $(\eta, \xi, \phi, g)$ on $\g =\h\times_\omega\R$, where $g(X,Y) = d\eta(\phi X,Y) + \eta(X)\eta(Y)$. Let $h(X,Y) = \omega(X,JY)$ be a associated (pseudo) Riemannian metric on the symplectic Lie algebra $(\h,\omega)$. Then for $x =X +\lambda\xi$ and $y = Y +\mu\xi$, we have:
$$
g(x,y) = -\omega(JX,Y) + \lambda\mu = h(X,Y) +\lambda\mu.
$$
As is known \cite{KN}, an almost complex structure $J$ is \emph{integrable} (complex), if the Nijenhuis tensor vanishes,

$$
N_J(X,Y) = [JX,JY] -[X,Y] -J[X,JY] -J[JX,Y]=0.
$$
An analogue of the Nijenhuis tensor of the almost complex structure in the case of any tensor field $T$ of type (1,1) is the Nijenhuis torsion \cite{Blair}:
$$
[T,T](X,Y) =T^2[X,Y] +[TX,TY] -T[X,TY] -T[TX,Y].
$$

\begin{proposition}  \label{Prop-1-K-Cont}
A central extension $\g =\h\times_\omega\R$ of almost pseudo-K\"{a}hler Lie algebra $\h$ is $K$-contact Lie algebra.
\end{proposition}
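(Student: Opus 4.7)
The plan is a direct verification. Since the quadruple $(\eta,\xi,\phi,g)$ on $\g=\h\times_\omega\R$ has already been constructed in the text preceding the proposition, there are really two things to check: first, that it truly is a contact metric structure (in the pseudo-Riemannian sense), and second, that $\xi$ is Killing with respect to $g$. Only the second is the substance of the statement, and it will follow essentially for free from the fact that $\xi$ is central in $\g$.

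For the contact metric axioms, I would write an arbitrary element of $\g$ as $x=X+\lambda\xi$ with $X\in\h$, and similarly $y=Y+\mu\xi$. The identity $\phi^2 x = J^2 X = -X = -x+\eta(x)\xi$ is immediate from $J^2=-I$ on $\h$. For $d\eta(x,y)=g(x,\phi y)$, expand $d\eta(x,y)=-\eta([x,y]_\g)=-\omega(X,Y)$ using the central-extension bracket, and compute $g(x,\phi y)=h(X,JY)=\omega(X,J^2 Y)=-\omega(X,Y)$. For $g(\phi x,\phi y)=g(x,y)-\eta(x)\eta(y)$, I would compute $g(\phi x,\phi y)=h(JX,JY)$ and apply the compatibility $\omega(JX,JY)=\omega(X,Y)$ to obtain $h(JX,JY)=h(X,Y)$, which matches $g(x,y)-\eta(x)\eta(y)=h(X,Y)$. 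The non-degeneracy $\eta\wedge(d\eta)^n\ne 0$ is automatic, since $d\eta|_\h=-\omega$ is a symplectic form on $\h$.

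For the Killing property, the decisive fact is that $\xi$ is central in $\g$ by the very definition of the central extension: $[X,\xi]_\g=0$ for every $X\in\h$, and $[\xi,\xi]_\g=0$. Since $g$ is left-invariant, $g(x,y)$ is constant on left-invariant vector fields, so the Lie derivative collapses to
\[
(L_\xi g)(x,y) \;=\; -\,g([\xi,x]_\g,y) \;-\; g(x,[\xi,y]_\g) \;=\; 0.
\]
Therefore $\xi$ is a Killing field and $(\eta,\xi,\phi,g)$ is $K$-contact.

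I do not expect any genuine obstacle: the proof is a bookkeeping exercise built entirely out of the defining identities of the central extension and the almost pseudo-K\"ahler structure. The only place where care is needed is consistency of conventions, namely the signs in $d\eta(x,y)=-\eta([x,y])$, the compatibility $\omega(JX,JY)=\omega(X,Y)$, and the orthogonal extension $g(\xi,\xi)=1$, $g(\xi,\h)=0$. It is worth noting that integrability of $J$ is nowhere used, which is precisely why the weaker almost pseudo-K\"ahler hypothesis already suffices for $K$-contact, in contrast to the pseudo-Sasakian case which would additionally require $N^{(1)}=0$.
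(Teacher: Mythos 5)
Your proof is correct and rests on exactly the same observation as the paper's: since $\xi$ is central and all the tensors are left-invariant, every Lie derivative along $\xi$ vanishes. The only cosmetic difference is that you verify $L_\xi g=0$ directly (the condition in Definition~\ref{Pseudo-Sasac}), whereas the paper checks the equivalent condition $L_\xi\phi=0$; your additional verification of the contact metric axioms is a harmless bonus that the paper delegates to the preceding construction.
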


\begin{proof}
As is well known \cite{Blair}, a contact manifold is called $K$-contact if $L_\xi\phi =0$. For left-invariant fields of the form $x =X +\lambda\xi$ and $y = Y +\mu\xi$ where $X,Y\in \h$, we have:
$$
g((L_\xi\phi)x,y) = g(L_\xi(\phi\, x) -\phi(L_\xi x),y) = 0,
$$
because the $L_\xi x = [\xi, X +\lambda\xi] = 0$ and $L_\xi (\phi\, x) =0$. Therefore $L_\xi\phi =0$.
\end{proof}

\begin{proposition}  \label{Prop-2}
Let $(\h,\omega,J)$ be a almost (pseudo) K\"{a}hler Lie algebra $\h$ and $(\eta, \xi, \phi, g)$ be the corresponding contact metric structure on the central extension of $\g =\h\times_\omega\R$. Then the Nijenhuis torsion $[\phi,\phi]$ on $\g$ is expressed in terms of the Nijenhuis tensor $N_J$ almost complex structure $J$ on $\h$ as:
$$
[\phi,\phi](x,y) = N_J(X,Y) -d\eta (x,y)\xi,
$$
where $x =X +\lambda\xi$, $y = Y +\mu\xi$ and $X,Y \in\h$.
\end{proposition}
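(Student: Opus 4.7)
The plan is to expand the Nijenhuis torsion $[\phi,\phi](x,y) = \phi^2[x,y] + [\phi x, \phi y] - \phi[x,\phi y] - \phi[\phi x, y]$ directly, using the explicit bracket $[x,y]_\g = [X,Y]_\h + \omega(X,Y)\xi$ on the central extension together with the definition $\phi(X + \lambda\xi) = JX$ of the affinor. Since $\phi$ kills $\xi$ and acts as $J$ on the $\h$-factor, each of the four terms reduces to an expression living mostly in $\h$, and the $\xi$-contributions should combine to produce exactly the $-d\eta(x,y)\xi$ correction.

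First I would record the useful identities: $\phi(x) = JX$, $\phi^2(x) = -X = -x + \eta(x)\xi$, and $d\eta(x,y) = -\eta([x,y]_\g) = -\omega(X,Y)$ (from the definition of the extension and $\eta = \xi^*$). Then I compute the four pieces. For $\phi^2[x,y]_\g$, the bracket splits as $[X,Y]_\h + \omega(X,Y)\xi$; applying $\phi^2 = -I + \eta\otimes\xi$ annihilates the $\xi$-part and gives $-[X,Y]_\h$. For $[\phi x,\phi y]_\g$ I use the bracket formula on the extension to get $[JX,JY]_\h + \omega(JX,JY)\xi$, and here the compatibility $\omega(JX,JY) = \omega(X,Y)$ of $J$ with $\omega$ is what makes the $\xi$-coefficient equal $\omega(X,Y)$. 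The two terms $\phi[x,\phi y]_\g$ and $\phi[\phi x, y]_\g$ give simply $J[X,JY]_\h$ and $J[JX,Y]_\h$, because $\phi$ erases the $\xi$-components produced by the cocycle.

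Assembling the four pieces yields
\begin{equation*}
[\phi,\phi](x,y) = \bigl(-[X,Y]_\h + [JX,JY]_\h - J[X,JY]_\h - J[JX,Y]_\h\bigr) + \omega(X,Y)\xi,
\end{equation*}
and the bracketed expression is precisely $N_J(X,Y)$, while $\omega(X,Y)\xi = -d\eta(x,y)\xi$ by the identity above, giving the claimed formula.

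There is no real obstacle here; the entire proof is a bookkeeping exercise. The only point that requires care is tracking which terms in each bracket lie in $\h$ and which lie along $\xi$, and recognizing that the $\omega(JX,JY)\xi$ contribution from $[\phi x,\phi y]_\g$ is the sole surviving $\xi$-component (the cocycle contributions in $[x,\phi y]_\g$ and $[\phi x,y]_\g$ are killed by the outer $\phi$), so that the $J$-invariance of $\omega$ is exactly what converts it into the $-d\eta(x,y)\xi$ term.
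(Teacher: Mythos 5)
Your proposal is correct and follows essentially the same route as the paper: a direct term-by-term expansion of the Nijenhuis torsion using the central-extension bracket, the fact that $\phi$ annihilates $\xi$ and acts as $J$ on $\h$, and the compatibility $\omega(JX,JY)=\omega(X,Y)$ to identify the sole surviving $\xi$-component with $-d\eta(x,y)\xi$. No gaps.
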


\begin{proof}
Direct calculations:
\begin{multline*}
[\phi,\phi](x,y) = [\phi,\phi](X +\lambda\xi, Y +\mu\xi) =\\
= \phi^2[X +\lambda\xi, Y +\mu\xi] + [\phi(X +\lambda\xi), \phi(Y +\mu\xi)] -\phi[X +\lambda\xi, \phi(Y +\mu\xi)] - \phi[\phi(X +\lambda\xi), Y +\mu\xi] =\\
= \phi^2([X, Y]_{\h}) +[J(X), J(Y)] -\phi[X +\lambda\xi, J(Y)] -\phi[J(X), Y +\mu\xi] = \\
= -[X, Y]_{\h} +[J(X),J(Y)]_{\h} +\omega(JX, JY)\xi -\phi[X,J(Y)] -\phi[J(X),Y] = \\
= -[X, Y]_{\h} + [J(X), J(Y)]_{\h} + \omega(X, Y)\xi -\phi([X, J(Y)]_{\h} +\omega(X, JY)\xi) -\phi([J(X), Y]_{\h} +\omega(JX, Y)\xi ) = \\
= -[X, Y]_{\h} + [J(X), J(Y)]_{\h} + \omega(X, Y)\xi -J([X, J(Y)]_{\h}) -J([J(X), Y]_{\h}) = \\
= N_J(X,Y) + \omega(X, Y)\xi = N_J(X,Y) -d\eta(x,y)\xi.
\end{multline*}
\end{proof}

\begin{corollary} \label{Cor-1}
The tensor $N^{(1)}(x,y)$ of the contact metric structure $(\eta, \xi, \phi, g)$ on the central expansion $\g =\h\times_\omega\R$ expressed in terms of the Nijenhuis tensor $N_J$ of almost complex structure $J$ on h using the formula:
$$
N^{(1)}(x,y) = N_J(X,Y),
$$
where $x =X +\lambda\xi$, $y = Y +\mu\xi$ and $X,Y \in \h$.
\end{corollary}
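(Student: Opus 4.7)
The corollary falls out almost immediately from Proposition \ref{Prop-2}, so the plan is essentially a one-line substitution rather than a new computation. I would simply unfold the definition of $N^{(1)}$ stated in Section \ref{Cont-Manif},
\[
N^{(1)}(x,y) = [\phi,\phi](x,y) + d\eta(x,y)\,\xi,
\]
and then plug in the formula for $[\phi,\phi](x,y)$ established in Proposition \ref{Prop-2}, namely $[\phi,\phi](x,y) = N_J(X,Y) - d\eta(x,y)\,\xi$. The two $d\eta(x,y)\,\xi$ terms cancel, leaving $N^{(1)}(x,y) = N_J(X,Y)$, which is exactly the claim.

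The only thing worth commenting on is a conventions check: the paper is using the non-normalized exterior derivative (see the Remark), so the definition of $N^{(1)}$ here reads $[\phi,\phi] + d\eta\otimes\xi$ rather than Blair's $[\phi,\phi] + 2\,d\eta\otimes\xi$, and Proposition \ref{Prop-2} is proved with the same convention. Thus the cancellation is exact and no stray factor of $2$ appears. I would flag this briefly, since it is the only place where a reader might worry.

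There is no real obstacle; the substantive content was already absorbed into Proposition \ref{Prop-2}. Accordingly, the write-up is just two lines: state the definition of $N^{(1)}$, invoke Proposition \ref{Prop-2}, and observe the cancellation.
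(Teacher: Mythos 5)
Your proposal is correct and is exactly the paper's own argument: substitute the expression for $[\phi,\phi](x,y)$ from Proposition~\ref{Prop-2} into the definition $N^{(1)}(x,y) = [\phi,\phi](x,y) + d\eta(x,y)\xi$ and cancel the two $d\eta(x,y)\xi$ terms. Your remark about the non-normalized convention for $d\eta$ (hence no stray factor of $2$) is a sensible, accurate observation, though the paper does not restate it at this point.
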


\begin{proof}
$N^{(1)}(x,y) = [\phi,\phi](x,y) + d\eta(x,y)\xi = N_J(X,Y) -d\eta(x,y)\xi  +d\eta(x,y)\xi = N_J(X,Y)$.
\end{proof}

\begin{corollary} \label{Cor-2}
Contact metric structure $(\eta, \xi, \phi, g)$ on the central expansion $\g =\h\times_\omega\R$ is (pseudo) Sasakian if and only if the symplectic algebra $(\h,\omega,J)$ is a (pseudo) K\"{a}hler.
\end{corollary}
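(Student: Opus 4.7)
The plan is to deduce this directly from Corollary \ref{Cor-1}, which has already done all the nontrivial computational work. Recall from Definition \ref{Pseudo-Sasac} that the contact metric structure $(\eta,\xi,\phi,g)$ is pseudo-Sasakian precisely when the tensor $N^{(1)}$ vanishes identically on $\g$. So the statement reduces to showing that $N^{(1)} \equiv 0$ on $\g$ is equivalent to $(\h,\omega,J)$ being pseudo-K\"ahler.

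For the forward direction, I would take arbitrary $X,Y\in\h$ and lift them to $x=X$, $y=Y$ in $\g=\h\times_\omega\R$. Corollary \ref{Cor-1} gives $N^{(1)}(x,y) = N_J(X,Y)$, so $N^{(1)}\equiv 0$ on $\g$ forces $N_J\equiv 0$ on $\h$, i.e., $J$ is integrable. Since the hypothesis already provides that $\omega$ is symplectic and that $J$ is compatible with $\omega$ (so that $h(X,Y)=\omega(X,JY)$ is an associated pseudo-Riemannian metric), the integrability of $J$ upgrades the almost pseudo-K\"ahler datum $(\h,\omega,J)$ to a genuine pseudo-K\"ahler structure, as recalled in the Introduction.

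For the converse, if $(\h,\omega,J)$ is pseudo-K\"ahler then $J$ is integrable, hence $N_J\equiv 0$. Applying Corollary \ref{Cor-1} once more, $N^{(1)}(x,y)=N_J(X,Y)=0$ for all $x=X+\lambda\xi$, $y=Y+\mu\xi$ in $\g$, so the contact metric structure $(\eta,\xi,\phi,g)$ on $\g=\h\times_\omega\R$ satisfies the pseudo-Sasakian condition.

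Because the identification $N^{(1)}=N_J$ under the central extension has already been established, there is no real obstacle here; the proof is essentially a two-line invocation of Corollary \ref{Cor-1} together with the definitions of pseudo-Sasakian and pseudo-K\"ahler. The only point worth flagging is that one should remark explicitly that the compatibility of $J$ with $\omega$ and the closedness of $\omega$ are built into the construction of $(\eta,\xi,\phi,g)$ from $(\h,\omega,J)$ in Section \ref{Cont-Alg-Lie}, so that integrability of $J$ is indeed the sole remaining condition distinguishing almost pseudo-K\"ahler from pseudo-K\"ahler.
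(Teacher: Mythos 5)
Your proposal is correct and is exactly the argument the paper intends: Corollary \ref{Cor-2} is stated as an immediate consequence of Corollary \ref{Cor-1}, since the pseudo-Sasakian condition is $N^{(1)}\equiv 0$ and the identity $N^{(1)}(x,y)=N_J(X,Y)$ reduces this to the integrability of $J$, the compatibility with $\omega$ and closedness of $\omega$ being built into the construction. Nothing further is needed.
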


As mentioned in the introduction, left-invariant K\"{a}hler (i.e. positive-definite) structures do not exist on nilpotent Lie groups other than the torus \cite{BG}. However, the pseudo-K\"{a}hler structure of these Lie groups may exist and in \cite{CFU2} a complete list of these is given. For each Lie algebra on the list in \cite{CFU2} an example is selected of a nilpotent complex structure, and the agreed symplectic forms are found for this. It is more correct to rely on the classification list produced by Goze, Khakimdjanov and Medina \cite{Goze-Khakim-Med}, which lists all symplectic six-dimensional Lie algebras and shows that every nilpotent Lie algebra symplecto-isomorphic is on this list. From this point of view, the author in \cite{Smolen-11} studied the above Lie algebra with the symplectic structure from the list \cite{Goze-Khakim-Med} and found all compatible complex structures. Explicit expressions of complex structures were obtained and the curvature properties of the corresponding pseudo-Riemannian metrics investigated. It was found that there are multi-parameter familyies of complex structures. However, they all share a number of common properties: the associated pseudo K\"{a}hler metric is Ricci-flat, the Riemann tensor has zero pseudo-norm, and the Riemann tensor has a few nonzero components, which depend only on two or, at most, three parameters. Recall also that \cite{Sal-1} presents a classification of the six-dimensional real nilpotent Lie algebras admitting an invariant complex structure and an estimation of the dimension of the moduli spaces of such structures is given.

\textbf{Example}\\
Consider the Lie algebra $\h_{14}$ with commutation relations $[e_1,e_2] = e_4$, $[e_2,e_3] = e_6$, $[e_2,e_4] = e_5$ and symplectic form $\omega = -e^1\wedge e^6 +e^2\wedge e^5 +e^3\wedge e^4$. In \cite{Smolen-11} it is shown that there is a six-parametric family of pseudo-K\"{a}hler metrics on $\h_{14}$, each of which is defined by the operator of the complex structure of the form:
$$
J = \left(\begin{array}{cccccc}
      \psi_{11} & \psi_{12} & 0 & 0 & 0 & 0 \\
      -\frac{\psi_{11}^2+1}{\psi_{12}} & -\psi_{11} & 0 & 0 & 0 & 0 \\
      \frac{\psi_{42}(\psi_{11}^2+1) -2\psi_{41}\psi_{12}\psi_{11}}{\psi_{12}^2} & -\psi_{41} & -\psi_{11} & -\frac{\psi_{11}^2+1}{\psi_{12}} & 0 & 0 \\
      \psi_{41} & \psi_{42} & \psi_{12} & \psi_{11} & 0 & 0 \\
      \psi_{51} & J_{52} & \psi_{42} & \psi_{41} & \psi_{11} & \psi_{12}\\
      \psi_{61} & -\psi_{51}  & -\psi_{41} & \frac{\psi_{42}(\psi_{11}^2+1)-
      2\psi_{41}\psi_{12}\psi_{11}}{\psi_{12}^2} & -\frac{\psi_{11}^2+1}{\psi_{12}}& -\psi_{11} \\
    \end{array}\right),
$$
where $J_{52}= \frac{-2\psi_{11}\psi_{12}(\psi_{42}\psi_{41} -\psi_{12}\psi_{51}) +\psi_{42}^2(\psi_{11}^2 +1) +\psi_{12}^2(\psi_{41}^2 +\psi_{12}\psi_{61})}{(\psi_{11}^2+1)\psi_{12}}$ and $\psi_{12} \ne 0$.
The corresponding pseudo-Riemannian metric is easily obtained in the form $h(X,Y) = \omega(X,JY)$. The curvature tensor is zero for all values of the parameters $\psi_{ij}$.

\section{Contact Lie algebras} \label{Results}
In this section, we will determine when the seven-dimensional nilpotent Lie algebra there are Sasakian and pseudo-Sasakian structure and expresses explicitly the properties of contact structures on a central expansion $\g =\h\times_\omega\R$ through the corresponding properties of the symplectic Lie algebra $(\h,\omega)$.

\begin{theorem} \label{Th-1}
The only seven-dimensional nilpotent Lie algebra admitting a positive definite Sasakian structure is the Heisenberg algebra. A seven-dimensional nilpotent Lie algebra admits pseudo-Sasakian structure if and only if it is one of the algebras on the following list:
\begin{longtable}[H]{|l|l|}
\hline
$\g_{21,1}$ &
\begin{tabular}{l}
   $[e_1,e_2] = e_4, [e_1,e_4] = e_6, [e_2,e_3] = e_6 $, \\
  $[e_1,e_6] = e_7, [e_2,e_5] = e_7, [e_3,e_4] = -e_7$ \\
  $\eta = e^7$, \quad
  $d\eta = -(e^1\wedge e^6 + e^2\wedge e^5 - e^3\wedge e^4)$ \\
\end{tabular} \\
\hline

$\g_{21,2}$ &
\begin{tabular}{l}
   $[e_1,e_2] = e_4$, $[e_1,e_4] = e_6$, $[e_2,e_3] = e_6$, \\
  $[e_1,e_6] = -e_7$, $[e_2,e_5] = -e_7$, $[e_3,e_4] =e_7$ \\
  $\eta = e^7$, \quad
  $d\eta = e^1\wedge e^6 + e^2\wedge e^5 -e^3\wedge e^4$ \\
\end{tabular} \\
\hline

$\g_{14,1}$ &
\begin{tabular}{l}
  $[e_1,e_2] =e_4, [e_2,e_3] =e_6, [e_2,e_4] =e_5$, \\
  $[e_1,e_6] = -e_7, [e_2,e_5] = e_7, [e_3,e_4] = e_7$ \\
  $\eta = e^7$, \quad
$d\eta = -(-e^1\wedge e^6 + e^2\wedge e^5 + e^3\wedge e^4)$ \\
\end{tabular} \\
\hline

$\g_{13,1}$ &
\begin{tabular}{l}
$[e_1,e_2] =e_4, [e_1,e_3] =e_5, [e_1,e_4] =e_6, [e_2,e_3] =e_6$,\\
$[e_1,e_6] = e_7, [e_2,e_5] = \lambda e_7, [e_3,e_4] = (\lambda -1)e_7$	\\
  $\eta = e^7$, \quad
 $d\eta = -(e^1\wedge e^6 + \lambda e^2\wedge e^5 + (\lambda -1) e^3\wedge e^4)$ \\
\end{tabular} \\
\hline

$\g_{13,2}$	&
\begin{tabular}{l}
$[e_1,e_2] =e_4, [e_1,e_3] =e_5, [e_1,e_4] =e_6, [e_2,e_3] =e_6$,\\
$[e_1,e_6] = e_7, [e_2,e_4] = e_7, [e_2,e_5] =  \frac 12 e_7, [e_3,e_4] = -\frac 12 e_7$ \\	
$\eta = e^7$, \quad
 $d\eta = -(e^1\wedge e^6 + e^2\wedge e^4 + \frac 12  e^2\wedge e^5 -\frac 12  e^3\wedge e^4)$ \\
\end{tabular} \\
\hline

$\g_{15,1}$	&
\begin{tabular}{l}
$[e_1,e_2] =e_4, [e_1,e_4] =e_6, [e_2,e_3] =e_5$,\\
$[e_1,e_5] = -e_7, [e_1,e_6] = e_7, [e_2,e_5] = e_7, [e_3,e_4] = e_7$, \\	
$\eta = e^7$, \quad
$d\eta = -(-e^1\wedge e^5 + e^1\wedge e^6 + e^2\wedge e^5 + e^3\wedge e^4)$\\
\end{tabular} \\
\hline

$\g_{15,2}$	&
\begin{tabular}{l}
$[e_1,e_2] =e_4, [e_1,e_3] =e_6, [e_2,e_4] =e_5$,\\
$[e_1,e_5] = e_7, [e_1,e_6] = -e_7, [e_2,e_5] = -e_7, [e_3,e_4] = -e_7$	\\
  $\eta = e^7$, \quad
$d\eta = -e^1\wedge e^5 + e^1\wedge e^6 + e^2\wedge e^5 + e^3\wedge e^4$\\
\end{tabular} \\
\hline

$\g_{11,1}$	&
\begin{tabular}{l}
$[e_1,e_2] =e_4, [e_1,e_4] =e_5, [e_2,e_3] =e_6, [e_2,e_4] =e_6$,\\
$[e_1,e_6] = e_7, [e_2,e_5] = e_7, [e_3,e_4] = -e_7, [e_2,e_6] = \lambda e_7$, \\
  $\eta = e^7$, \quad
$d\eta = -(e^1\wedge e^6 + e^2\wedge e^5 - e^3\wedge e^4 +\lambda e^2\wedge e^6)$
\end{tabular} \\
\hline

$\g_{11,2}$	&
\begin{tabular}{l}
$[e_1,e_2] =e_4, [e_1,e_4] =e_5, [e_2,e_3] =e_6, [e_2,e_4] =e_6$,\\
$[e_1,e_6] = -e_7, [e_2,e_5] = -e_7, [e_3,e_4] = e_7, [e_2,e_6] = -\lambda e_7$,\\
  $\eta = e^7$, \quad
$d\eta = e^1\wedge e^6 + e^2\wedge e^5 - e^3\wedge e^4 + \lambda e^2\wedge e^6$
\end{tabular} \\
\hline

$\g_{10,1}$	&
\begin{tabular}{l}
$[e_1,e_2] =e_4, [e_1,e_4] =e_5, [e_1,e_3] =e_6, [e_2,e_4] =e_6$,\\
$[e_1,e_6] = e_7, [e_2,e_5] = e_7, [e_3,e_4] = -e_7, [e_2,e_6] = -e_7$,\\
  $\eta = e^7$, \quad
$d\eta = -(e^1\wedge e^6 + e^2\wedge e^5 -e^3\wedge e^4 -e^2\wedge e^6)$
\end{tabular} \\
\hline

$\g_{10,2}$	&
\begin{tabular}{l}
$[e_1,e_2] =e_4, [e_1,e_4] =e_5, [e_1,e_3] =e_6, [e_2,e_4] =e_6,$\\
$[e_1,e_6] = -e_7, [e_2,e_5] = -e_7, [e_3,e_4] = e_7, [e_2,e_6] = e_7$,\\
  $\eta = e^7$, \quad
$d\eta = e^1\wedge e^6 + e^2\wedge e^5 -e^3\wedge e^4 -e^2\wedge e^6$
\end{tabular} \\
\hline

$\g_{12}$ &
\begin{tabular}{l}
$[e_1,e_2] =e_4, [e_1,e_4] =e_5, [e_1,e_3] =e_6, [e_2,e_3] = -e_5, [e_2,e_4] =e_6$, \\
$[e_1,e_5] = \lambda e_7, [e_2,e_6] = e_7, [e_3,e_4] = (\lambda +1)e_7$,\\
  $\eta = e^7$, \quad
$d\eta = -(\lambda e^1\wedge e^5 + e^2\wedge e^6 +(\lambda +1) e^3\wedge e^4)$
\end{tabular} \\
\hline

$\g_{24,1}$ &
\begin{tabular}{l}
$[e_1,e_4] =e_6, [e_2,e_3] =e_5$, \\
$[e_1,e_6] = e_7, [e_2,e_5] = e_7, [e_3,e_4] = e_7$,\\
  $\eta = e^7$, \quad
$d\eta = -(e^1\wedge e^6 + e^2\wedge e^5 + e^3\wedge e^4)$
\end{tabular} \\
\hline

$\g_{24,2}$	&
\begin{tabular}{l}
$[e_1,e_4] =e_6, [e_2,e_3] =e_5$,\\
$[e_1,e_6] = -e_7, [e_2,e_5] = -e_7, [e_3,e_4] = -e_7$,\\
  $\eta = e^7$, \quad
$d\eta = e^1\wedge e^6 + e^2\wedge e^5 + e^3\wedge e^4$
\end{tabular} \\
\hline

$\g_{17}$ &
\begin{tabular}{l}
$[e_1,e_3] =e_5, [e_1,e_4] =e_6, [e_2,e_3] =e_6$, \\
$[e_1,e_6] = e_7, [e_2,e_5] = e_7, [e_3,e_4] = e_7$,\\
  $\eta = e^7$, \quad
$d\eta = -(e^1\wedge e^6 + e^2\wedge e^5 + e^3\wedge e^4)$
\end{tabular} \\
\hline

$\g_{16,1}$ &
\begin{tabular}{l}
$[e_1,e_2] =e_5, [e_1,e_3] =e_6, [e_2,e_4] =e_6, [e_3,e_4] = -e_5$, \\
$[e_1,e_6] = e_7, [e_2,e_3] = e_7, [e_4,e_5] = -e_7$,\\
  $\eta = e^7$, \quad
$d\eta = -(e^1\wedge e^6 + e^2\wedge e^3 - e^4\wedge e^5)$
\end{tabular} \\
\hline

$\g_{16,2}$	&
\begin{tabular}{l}
$[e_1,e_2] =e_5, [e_1,e_3] =e_6, [e_2,e_4] =e_6, [e_3,e_4] = -e_5$, \\
$[e_1,e_6] = e_7, [e_2,e_3] = -e_7, [e_4,e_5] = e_7$,\\
  $\eta = e^7$, \quad
$d\eta = -(e^1\wedge e^6 -e^2\wedge e^3 +e^4\wedge e^5)$
\end{tabular} \\
\hline

$\g_{18,1}$	&
\begin{tabular}{l}
$[e_1,e_2] =e_4, [e_1,e_3] =e_5, [e_2,e_3] =e_6$, \\
$[e_1,e_6] = e_7, [e_2,e_5] = \lambda e_7, [e_3,e_4] = (\lambda -1)e_7$,\\
  $\eta = e^7$, \quad
$d\eta = -(e^1\wedge e^6 + \lambda e^2\wedge e^5 + (\lambda -1) e^3\wedge e^4)$
\end{tabular} \\
\hline

$\g_{18,2}$	&
\begin{tabular}{l}
$[e_1,e_2] =e_4, [e_1,e_3] =e_5, [e_2,e_3] =e_6$, \\
$[e_1,e_5] = e_7, [e_1,e_6] = \lambda e_7, [e_2,e_5] = -\lambda e_7, [e_2,e_6] = e_7, [e_3,e_4] = -2\lambda e_7$,\\
  $\eta = e^7$, \quad
$d\eta = -(e^1\wedge e^5 + \lambda e^1\wedge e^6 -\lambda e^2\wedge e^5 + e^2\wedge e^6 -2\lambda e^3\wedge e^4)$
\end{tabular} \\
\hline

$\g_{18,3}$	&
\begin{tabular}{l}
$[e_1,e_2] =e_4, [e_1,e_3] =e_5, [e_2,e_3] =e_6$, \\
$[e_1,e_6] = -e_7, [e_2,e_5] = e_7, [e_3,e_4] = 2e_7, [e_3,e_5] = e_7$,\\
  $\eta = e^7$, \quad
$d\eta = -(-e^1\wedge e^6 + e^2\wedge e^5 + 2e^3\wedge e^4 + e^3\wedge e^5)$
\end{tabular} \\
\hline

$\g_{23,1}$	&
\begin{tabular}{l}
$[e_1,e_2] =e_5, [e_1,e_3] =e_6$, \\
$[e_1,e_4] = e_7, [e_2,e_6] = e_7, [e_3,e_5] = -e_7$,\\
  $\eta = e^7$, \quad
$d\eta = -(e^1\wedge e^4 + e^2\wedge e^6 - e^3\wedge e^5)$
\end{tabular} \\
\hline

$\g_{25}$ &
\begin{tabular}{l}
$[e_1,e_2] =e_6$, \\
$[e_1,e_6] = e_7, [e_2,e_5] = e_7, [e_3,e_4] = e_7$,\\
  $\eta = e^7$, \quad
$d\eta = -(e^1\wedge e^6 + e^2\wedge e^5 + e^3\wedge e^4)$
\end{tabular} \\
\hline
\end{longtable}
\end{theorem}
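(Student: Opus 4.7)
The overall strategy is to reduce the seven-dimensional classification to the already-known six-dimensional one, using Corollary \ref{Cor-2} as the pivot. Every seven-dimensional nilpotent contact Lie algebra $\g$ arises \cite{Goze-Khakim-Med} as a central extension $\g = \h \times_\omega \R$ of a six-dimensional nilpotent symplectic Lie algebra $(\h,\omega)$, where $\h \cong \g/Z(\g)$ and $\omega = -d\eta|_{\h}$. By Corollary \ref{Cor-2}, $\g$ carries a (pseudo) Sasakian structure if and only if $(\h,\omega)$ admits a compatible (pseudo) K\"{a}hler structure. My plan is therefore to walk through Kutsak's classification \cite{Kutsak} of seven-dimensional nilpotent contact Lie algebras, extract the six-dimensional symplectic base in each case, and retain exactly those algebras whose base lies on the pseudo-K\"{a}hler list \cite{CFU2,Smolen-11}.

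For the positive-definite Sasakian statement I invoke the Benson--Gordon theorem \cite{BG}: a nilpotent Lie algebra admitting a K\"{a}hler metric must be abelian, so $\h \cong \R^6$. A non-degenerate $\omega$ on $\R^6$ may be put in Darboux normal form, and the central extension $\R^6 \times_\omega \R$ is then isomorphic to the seven-dimensional Heisenberg algebra. Conversely, the Heisenberg algebra admits a Sasakian structure obtained by lifting the flat K\"{a}hler structure of $\R^6$, so this part of the theorem is sharp.

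For the pseudo-Sasakian assertion the plan is as follows. For each of the 22 candidates tabulated in the theorem one identifies $\h = \g/Z(\g)$ and the induced cocycle $\omega = -d\eta$ directly from the bracket and differential data; the pair $(\h,\omega)$ then matches one of the pseudo-K\"{a}hler pairs of \cite{Smolen-11} (for instance $\g_{14,1}$ reduces to $(\h_{14},\omega)$ appearing in the Example above). For each such pair I pick a compatible $J$ from the explicit parametric family of \cite{Smolen-11}, lift it to $\g$ by setting $\phi(X+\lambda\xi) = JX$, and take as associated metric $g(X+\lambda\xi,Y+\mu\xi) = h(X,Y) + \lambda\mu$ where $h(X,Y) = \omega(X,JY)$. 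Proposition \ref{Prop-2} and Corollary \ref{Cor-1} then yield $N^{(1)} = N_J = 0$, so $(\eta,\xi,\phi,g)$ is pseudo-Sasakian. For the converse, any seven-dimensional nilpotent contact Lie algebra not appearing in the table has either a base $\h$ not on the Cordero--Fern\'andez--Ugarte list \cite{CFU2}, or a cocycle class $[\omega] \in H^2(\h,\R)$ lying outside the pseudo-K\"{a}hler orbit on $\h$; in either case no compatible $J$ exists and $\g$ cannot be pseudo-Sasakian.

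The main obstacle is book-keeping rather than conceptual: one must reconcile the indexing in Kutsak \cite{Kutsak} with the Goze--Khakimdjanov--Medina labelling of six-dimensional symplectic Lie algebras used in \cite{Smolen-11}, verify for every algebra outside the 22-element list that no compatible $J$ can exist on the corresponding symplectic quotient (this is where the restrictive nature of the pseudo-K\"{a}hler classification \cite{CFU2} is actually used), and for each surviving algebra exhibit at least one explicit $J$. The six-dimensional parametric families are already worked out in \cite{Smolen-11}, so the remaining work is the systematic case-by-case sweep; once this is carried out, the list of 22 pseudo-Sasakian classes follows mechanically from Corollary \ref{Cor-2}.
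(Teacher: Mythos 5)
Your proposal follows essentially the same route as the paper: reduce via Corollary \ref{Cor-2} to the six-dimensional problem, use the Goze--Khakimdjanov--Medina symplectic classification together with the determination in \cite{Smolen-11,CFU2} of which symplectic algebras admit compatible complex structures, and lift the resulting pseudo-K\"{a}hler structures to central extensions. Your treatment of the positive-definite case (Benson--Gordon forces $\h$ abelian, Darboux form gives the Heisenberg algebra) is in fact spelled out more explicitly than in the paper's own proof, which only records that the Heisenberg algebra does admit a Sasakian structure and leaves the uniqueness implicit.
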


\begin{proof}
It is well known that the Heisenberg algebra $\h_{2n+1}$ is a central extension of the symplectic abelian Lie algebra, which admits the K\"{a}hler metric. Therefore, $\h_{2n+1}$ admits Sasaki structure. According to Corollary 2 of the previous section, pseudo-Sasakian structures are central extensions of pseudo-K\"{a}hler Lie algebra $(\h,\omega,J)$.
The authors of \cite{CFU2} found all six-dimensional nilpotent Lie algebras which admit pseudo-K\"{a}hler structure. In \cite{Goze-Khakim-Med}, the classification of symplectic six-dimensional nilpotent Lie algebras up to symplecto-isomorphisms is obtained.
The author of \cite{Smolen-11} found all the symplectic Lie algebras on the listing by Goze, Khakimdjanov and Medina which admit compatible complex structures, i.e., which are pseudo-K\"{a}hler. The Lie algebras of the list of the theorem are a central extension of the pseudo-K\"{a}hler Lie algebras found in \cite{Smolen-11}, based on the classification by Goze, Khakimdjanov and Medina. The indices of the Lie algebras correspond to their numbers in the classification list \cite{Goze-Khakim-Med}, and their order corresponds to their properties specified in \cite{Smolen-11}.
\end{proof}

Since the contact metric structure $(\eta, \xi, \phi, g)$ is a central extension $\g =\h\times_\omega\R$ of almost pseudo-K\"{a}hler structures on $\h$, then it is clear that a seven-dimensional nilpotent Lie algebra cannot be (pseudo) Sasakian when the symplectic algebra $(\h,\omega,J)$ is not (pseudo) K\"{a}hler, i.e., does not admit a compatible integrable almost complex structure $J$.
Since the complete list of the six-dimensional nilpotent symplectic Lie algebras admitting compatible complex structure $J$ is given in \cite{CFU2}, then the remaining 12 classes of symplectic six-dimensional nilpotent Lie algebras listed in \cite{Goze-Khakim-Med} do not admit Sasaki structures even with the pseudo-metric.
In addition, many of the Lie algebras in \cite{CFU2} can have several non-isomorphic symplectic structures \cite{Goze-Khakim-Med}. Moreover, some symplectic forms of the same Lie algebra allow compatible complex structures, while others do not. The central extensions of the last Lie algebras also do not admit pseudo-Sasakian structures.
Therefore, we have the following theorem in which the central extensions of symplectic Lie algebras are listed first which do not allow complex structures in general, and then the central extension of symplectic Lie algebras is specified, which may have complex structures but which do not allow the compatible complex structures of the considered symplectic form.

\begin{theorem} \label{Th-2}
The following nilpotent Lie algebras admit $K$-contact structure, but do not admit Sasaki structures even with pseudo-Riemannian metric:
\begin{longtable}[H]{|l|l|}
\hline

$\g_{1}$ &
\begin{tabular}{l}
$[e_1,e_2] = e_3, [e_1,e_3] =e_4, [e_1,e_4] =e_5, [e_1,e_5] =e_6, [e_2,e_3] =e_5,  [e_2,e_4] =e_6$, \\
$[e_1,e_6] = e_7, [e_2,e_5] = (1-\lambda) e_7, [e_3,e_4] = \lambda e_7$, \\
  $\eta = e^7$, \quad
$d\eta = -(e^1\wedge e^6 + (1-\lambda)e^2\wedge e^5 + \lambda e^3\wedge e^4)$\\
\end{tabular} \\
\hline

$\g_{2}$ &
\begin{tabular}{l}
$[e_1,e_2] = e_3, [e_1,e_3] =e_4, [e_1,e_4] =e_5, [e_1,e_5] =e_6, [e_2,e_3] =e_6$, \\
$[e_1,e_6] = \lambda e_7, [e_2,e_4] = \lambda e_7, [e_3,e_4] = \lambda e_7, [e_2,e_5] = -\lambda e_7$, \\
  $\eta = e^7$, \quad
$d\eta = -\lambda (e^1\wedge e^6 + e^2\wedge e^4 + e^3\wedge e^4 - e^2\wedge e^5)$
\end{tabular} \\
\hline

$\g_{3}$ &
\begin{tabular}{l}
$[e_1,e_2] = e_3, [e_1,e_3] =e_4, [e_1,e_4] =e_5, [e_1,e_5] =e_6$, \\
$[e_1,e_6] = e_7, [e_2,e_5] = -e_7, [e_3,e_4] = e_7$, \\
  $\eta = e^7$, \quad
$d\eta = -(e^1\wedge e^6 -e^2\wedge e^5 + e^3\wedge e^4)$
\end{tabular} \\
\hline

$\g_{4}$ &
\begin{tabular}{l}
$[e_1,e_2] = e_3, [e_1,e_3] =e_4, [e_1,e_4] =e_6, [e_2,e_3] =e_5, [e_2,e_5] =e_6$, \\
$[e_1,e_4] = \lambda_1e_7, [e_1,e_5] = \lambda_2e_7, [e_1,e_6] = \lambda_2e_7, [e_2,e_4] = \lambda_2e_7, [e_3,e_5] = \lambda_2e_7$, \\
  $\eta = e^7$, \quad
$d\eta = -\lambda_1e^1\wedge e^4 - \lambda_2(e^1\wedge e^5 + e^1\wedge e^6 + e^2\wedge e^4 + e^3\wedge e^5)$
\end{tabular} \\
\hline

$\g_{5,1}$ &
\begin{tabular}{l}
$[e_1,e_2] = e_3, [e_1,e_3] =e_4, [e_1,e_4] = -e_6, [e_2,e_3] =e_5, [e_2,e_5] =e_6$, \\
$[e_1,e_4] = \lambda_1e_7, [e_1,e_5] = \lambda_2e_7, [e_1,e_6] = \lambda_2e_7, [e_2,e_4] = \lambda_2e_7, [e_3,e_5] = \lambda_2e_7$, \\
  $\eta = e^7$, \quad
$d\eta = -\lambda_1 e^1\wedge e^4 -\lambda_2(e^1\wedge e^5 + e^1\wedge e^6 + e^2\wedge e^4 + e^3\wedge e^5)$
\end{tabular} \\
\hline

$\g_{5,2}$ &
\begin{tabular}{l}
$[e_1,e_2] = e_3, [e_1,e_3] =e_4, [e_1,e_4] = -e_6, [e_2,e_3] =e_5, [e_2,e_5] =e_6$, \\
$[e_1,e_6] = \lambda e_7, [e_1,e_5] = -2\lambda e_7, [e_2,e_4] = -2\lambda e_7, [e_2,e_6] = \lambda e_7, [e_3,e_4] = \lambda e_7$, \\
$[e_3,e_5] = \lambda e_7$, \\
$\eta = e^7$,
$d\eta = -\lambda (e^1\wedge e^6 -2e^1\wedge e^5 -2e^2\wedge e^4 + e^2\wedge e^6 + e^3\wedge e^4 + e^3\wedge e^5)$
\end{tabular} \\
\hline

$\g_{5,3}$ &
\begin{tabular}{l}
$[e_1,e_2] = e_3, [e_1,e_3] =e_4, [e_1,e_4] = -e6, [e_2,e_3] =e_5, [e_2,e_5] =e_6$, \\
$[e_1,e_4] = \lambda e_7, [e_1,e_5] = -\lambda e_7, [e_1,e_6] = \lambda e_7, [e_2,e_4] = -\lambda e_7$, \\
$[e_2,e_5] = \lambda e_7, [e_2,e_6] = \lambda e_7, [e_3,e_4] = \lambda e_7, [e_3,e_5] = \lambda e_7$, $\eta = e^7$,\\
$d\eta = -\lambda  (e^1\wedge e^4 -e^1\wedge e^5 + e^1\wedge e^6 -e^2\wedge e^4 + e^2\wedge e^5 + e^2\wedge e^6 + e^3\wedge e^4 + e^3\wedge e^5)$
\end{tabular} \\
\hline

$\g_{5,4}$ &
\begin{tabular}{l}
$[e_1,e_2] = e_3, [e_1,e_3] =e_4, [e_1,e_4] = -e_6, [e_2,e_3] =e_5, [e_2,e_5] =e_6$, \\
$[e_1,e_4] = 2\lambda e_7, [e_1,e_6] = \lambda e_7, [e_2,e_5] = 2\lambda e_7, [e_2,e_6] = \lambda e_7, [e_3,e_4] = \lambda e_7, [e_3,e_5] = \lambda e_7$, \\
  $\eta = e^7$, \quad
$d\eta = -\lambda (2e_1\wedge e_4  + e^1\wedge e^6  + 2e^2\wedge e^5 + e^2\wedge e^6 + e^3\wedge e^4 + e^3\wedge e^5)$
\end{tabular} \\
\hline

$\g_{6,1}$ &
\begin{tabular}{l}
$[e_1,e_2] = e_3, [e_1,e_3] =e_4, [e_1,e_4] =e_5, [e_2,e_3] =e_6$, \\
$[e_1,e_6] = e_7, [e_2,e_4] = e_7, [e_2,e_5] = e_7, [e_3,e_4] = -e_7$, \\
  $\eta = e^7$, \quad
$d\eta = -(e^1\wedge e^6 + e^2\wedge e^4 + e^2\wedge e^5 -e^3\wedge e^4)$
\end{tabular} \\
\hline

$\g_{6,1}$ &
\begin{tabular}{l}
$[e_1,e_2] = e_3, [e_1,e_3] =e_4, [e_1,e_4] =e_5, [e_2,e_3] =e_6$, \\
$[e_1,e_6] = -e_7, [e_2,e_4] = -e_7, [e_2,e_5] = -e_7, [e_3,e_4] = e_7$, \\
  $\eta = e^7$, \quad
$d\eta = e^1\wedge e^6 + e^2\wedge e^4 + e^2\wedge e^5 -e^3\wedge e^4$
\end{tabular} \\
\hline

$\g_{7,1}$ &
\begin{tabular}{l}
$[e_1,e_2] =e_4, [e_1,e_4] =e_5, [e_1,e_5] =e_6, [e_2,e_3] =e_6, [e_2,e_4] =e_6$, \\
$[e_1,e_3] = \lambda e_7, [e_2,e_6] = \lambda e_7, [e_4,e_5] = -\lambda e_7$, \\
  $\eta = e^7$, \quad
$d\eta = -\lambda (e^1\wedge e^3 + e^2\wedge e^6 -e^4\wedge e^5)$
\end{tabular} \\
\hline

$\g_{7,2}$ &
\begin{tabular}{l}
$[e_1,e_2] =e_4, [e_1,e_4] =e_5, [e_1,e_5] =e_6, [e_2,e_3] =e_6, [e_2,e_4] =e_6$, \\
$[e_1,e_6] = e_7, [e_2,e_5] = e_7, [e_3,e_4] = -e_7$, \\
  $\eta = e^7$, \quad
$d\eta = -(e^1\wedge e^6 + e^2\wedge e^5 -e^3\wedge e^4)$
\end{tabular} \\
\hline

$\g_{7,3}$ &
\begin{tabular}{l}
$[e_1,e_2] =e_4, [e_1,e_4] =e_5, [e_1,e_5] =e_6, [e_2,e_3] =e_6, [e_2,e_4] =e_6$, \\
$[e_1,e_6] = -e_7, [e_2,e_5] = -e_7, [e_3,e_4] = e_7$, \\
  $\eta = e^7$, \quad
$d\eta  = e^1\wedge e^6 + e^2\wedge e^5 - e^3\wedge e^4$
\end{tabular} \\
\hline

$\g_{8}$ &
\begin{tabular}{l}
$[e_1,e_3] =e_4, [e_1,e_4] =e_5, [e_1,e_5] =e_6, [e_2,e_3] =e_5, [e_2,e_4] =e_6$, \\
$[e_1,e_6] = e_7, [e_2,e_5] = e_7, [e_3,e_4] = -e_7$, \\
  $\eta = e^7$, \quad
$d\eta = -(e^1\wedge e^6 + e^2\wedge e^5 -e^3\wedge e^4)$
\end{tabular} \\
\hline

$\g_{9}$ &
\begin{tabular}{l}
$[e_1,e_2] =e_4, [e_1,e_4] =e_5, [e_1,e_5] =e_6, [e_2,e_3] =e_6$, \\
$[e_1,e_3] = \lambda e_7, [e_2,e_6] = \lambda e_7, [e_4,e_5] = -\lambda e_7$, \\
  $\eta = e^7$, \quad
$d\eta = -\lambda (e^1\wedge e^3 + e^2\wedge e^6 -e^4\wedge e^5)$
\end{tabular} \\
\hline

$\g_{19}$ &
\begin{tabular}{l}
$[e_1,e_2] =e_4, [e_1,e_4] =e_5, [e_1,e_5] =e_6$, \\
$[e_1,e_3] = e_7, [e_2,e_6] = e_7, [e_4,e_5] = -e_7$, \\
  $\eta = e^7$, \quad
$d\eta = -(e^1\wedge e^3 + e^2\wedge e^6 -e^4\wedge e^5)$
\end{tabular} \\
\hline

$\g_{20}$ &
\begin{tabular}{l}
$[e_1,e_2] = e_3, [e_1,e_3] =e_4, [e_1,e_4] =e_5, [e_2,e_3] =e_5$, \\
$[e_1,e_6] = e_7, [e_2,e_5] = e_7, [e_3,e_4] = -e_7$, \\
  $\eta = e^7$, \quad
$d\eta = -(e^1\wedge e^6 + e^2\wedge e^5 -e^3\wedge e^4)$
\end{tabular} \\
\hline

$\g_{22}$ &
\begin{tabular}{l}
$[e_1,e_2] =e_5, [e_1,e_5] =e_6$, \\
$[e_1,e_6] = e_7, [e_2,e_5] = e_7, [e_3,e_4] = e_7$, \\
  $\eta = e^7$, \quad
$d\eta = -(e^1\wedge e^6 + e^2\wedge e^5 + e^3\wedge e^4)$
\end{tabular} \\
\hline
		
$\g_{14,2}$ &
\begin{tabular}{l}
$[e_1,e_2] =e_4, [e_2,e_3] =e_6, [e_2,e_4] =e_5$, \\
$[e_1,e_4] = -e_7, [e_2,e_5] = e_7, [e_3,e_6] = e_7$, \\
  $\eta = e^7$, \quad
$d\eta = -(-e^1\wedge e^4 + e^2\wedge e^5 + e^3\wedge e^6)$
\end{tabular} \\
\hline

$\g_{14,3}$ &
\begin{tabular}{l}
$[e_1,e_2] =e_4, [e_2,e_3] =e_6, [e_2,e_4] =e_5$, \\
$[e_1,e_4] = e_7, [e_2,e_5] = e_7, [e_3,e_6] = e_7$, \\
  $\eta = e^7$, \quad
$d\eta = -(e^1\wedge e^4 + e^2\wedge e^5 + e^3\wedge e^6)$
\end{tabular} \\
\hline

$\g_{21,3}$ &
\begin{tabular}{l}
$[e_1,e_2] =e_4, [e_1,e_4] =e_6, [e_2,e_3] =e_6$, \\
$[e_1,e_6] = e_7, [e_2,e_4] = e_7, [e_3,e_4] = -e_7, [e_3,e_5] = -e_7$, \\
  $\eta = e^7$, \quad
$d\eta = -(e^1\wedge e^6 +e^2\wedge e^4 -e^3\wedge e^4 -e^3\wedge e^5)$
\end{tabular} \\
\hline

$\g_{13,3}$ &
\begin{tabular}{l}
$[e_1,e_2] =e_4, [e_1,e_3] =e_5, [e_1,e_4] =e_6, [e_2,e_3] =e_6$, \\
$[e_1,e_6] = e_7, [e_2,e_4] = \lambda e_7, [e_2,e_5] = e_7, [e_3,e_5] = e_7$, \\
  $\eta = e^7$, \quad
$d\eta = -(e^1\wedge e^6 + \lambda e^2\wedge e^4 + e^2\wedge e^5 + e^3\wedge e^5)$
\end{tabular} \\
\hline

$\g_{15,3}$ &
\begin{tabular}{l}
$[e_1,e_2] =e_4, [e_1,e_4] =e_6, [e_2,e_3] =e_5$, \\
$[e_1,e_6] = e_7, [e_2,e_4] = e_7, [e_3,e_5] = e_7$, \\
  $\eta = e^7$, \quad
$d\eta = -(e^1\wedge e^6 + e^2\wedge e^4 + e^3\wedge e^5)$
\end{tabular} \\
\hline

$\g_{23,2}$ &
\begin{tabular}{l}
$[e_1,e_2] =e_5, [e_1,e_3] =e_6$, \\
$[e_1,e_6] = e_7, [e_2,e_5] = e_7, [e_3,e_4] = e_7$, \\
  $\eta = e^7$, \quad
$d\eta = -(e^1\wedge e^6 + e^2\wedge e^5 + e^3\wedge e^4)$
\end{tabular} \\
\hline

$\g_{23,3}$ &
\begin{tabular}{l}
$[e_1,e_2] =e_5, [e_1,e_3] =e_6$, \\
$[e_1,e_4] = e_7, [e_2,e_6] = e_7, [e_3,e_5] = e_7$, \\
  $\eta = e^7$, \quad
$d\eta = -(e^1\wedge e^4 + e^2\wedge e^6 + e^3\wedge e^5)$
\end{tabular} \\
\hline

\end{longtable}
\end{theorem}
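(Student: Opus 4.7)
The plan is to follow the same scheme that supported Theorem \ref{Th-1}, but run in the negative direction for the Sasakian property. For each algebra $\g$ on the list, I would first identify it as a central extension $\g = \h\times_\omega\R$ of a six-dimensional nilpotent symplectic Lie algebra $(\h,\omega)$: drop the generator $e_7$, define $\h$ by the brackets not involving $e_7$, and read off the symplectic form on $\h$ from the relation $d\eta(x,y) = -\omega(X,Y)$. The contact condition $\eta\wedge(d\eta)^3\neq 0$ translates into $\omega^3\neq 0$ on $\h$, i.e.\ the nondegeneracy of $\omega$; this needs to be checked row by row but is immediate from the stated $d\eta$.

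For the $K$-contact half I would invoke Proposition \ref{Prop-1-K-Cont}: it suffices to exhibit on each $(\h,\omega)$ an almost complex structure $J$ with $J^2=-I$ and $\omega(JX,JY) = \omega(X,Y)$. Since every symplectic vector space admits such a $J$, one can write one down explicitly in a Darboux-type basis of $\omega$; the affinor $\phi$ on $\g$ defined by $\phi(X+\lambda\xi) = JX$ together with the associated metric $g(X,Y) = d\eta(\phi X,Y) + \eta(X)\eta(Y)$ then produces the $K$-contact structure.

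The negative Sasakian half is the real content and is handled via Corollary \ref{Cor-2}: the central extension fails to be pseudo-Sasakian exactly when $(\h,\omega)$ admits no compatible \emph{integrable} $J$. The list splits into two blocks in the order it is written. The first block, $\g_1$ through $\g_{22}$, gathers the central extensions of those twelve six-dimensional nilpotent symplectic Lie algebras from the Goze--Khakimdjanov--Medina classification \cite{Goze-Khakim-Med} that do not appear in the Cordero--Fernandez--Ugarte list \cite{CFU2} of six-dimensional nilpotent Lie algebras admitting any invariant complex structure; for these, $\h$ carries no integrable $J$ at all, so a fortiori none compatible with $\omega$, and Corollary \ref{Cor-2} closes the case. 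The second block, $\g_{14,2}$, $\g_{14,3}$, $\g_{21,3}$, $\g_{13,3}$, $\g_{15,3}$, $\g_{23,2}$, $\g_{23,3}$, is more delicate: the underlying $\h$ does admit complex structures, but the particular symplectic form chosen here lies in a symplecto-isomorphism class distinct from the ones proved to be pseudo-K\"{a}hler in \cite{Smolen-11}.

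The main obstacle is this last step. For each of the seven second-block entries one must show that among the multi-parameter family of $\omega$-compatible almost complex structures on $\h$, not a single one is integrable. The approach I would take is to parametrize a general $\omega$-compatible $J$ by its matrix in the basis $e_1,\dots,e_6$ subject to $J^2=-I$ and $\omega(JX,JY)=\omega(X,Y)$, expand the vanishing of $N_J(e_i,e_j) = [Je_i,Je_j] - [e_i,e_j] - J[e_i,Je_j] - J[Je_i,e_j]$ into polynomial equations in the entries of $J$, and verify that the resulting system has no real solution. This calculation has been performed systematically in \cite{Smolen-11} for every symplectic form on every six-dimensional nilpotent Lie algebra; accordingly, the proof is completed by matching each entry of the present list to its Goze--Khakimdjanov--Medina representative and quoting the corresponding non-existence statement.
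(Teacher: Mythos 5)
Your proposal follows essentially the same route as the paper: the $K$-contact half comes from Proposition \ref{Prop-1-K-Cont} applied to any $\omega$-compatible almost complex structure on the underlying six-dimensional symplectic algebra, and the non-Sasakian half comes from Corollary \ref{Cor-2} together with the classification results of \cite{Goze-Khakim-Med}, \cite{CFU2} and \cite{Smolen-11}, with the list split into exactly the two blocks you describe (algebras admitting no complex structure at all, and algebras whose particular symplectic form admits no compatible integrable $J$). If anything, your write-up is more explicit than the paper's, which presents this argument only in the paragraph preceding the theorem and defers the case-by-case non-integrability verification entirely to the cited references.
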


\subsection{Covariant derivatives and curvature} \label{Formuly}
In \cite{Smolen-11} it is shown that the pseudo-K\"{a}hler structure on a six-dimensional nilpotent Lie algebra has a zero Ricci tensor. However, for Sasaki structures the Ricci tensor is non-zero, even if such a structure is obtained from a pseudo-K\"{a}hler Lie algebra by central extension. Therefore, in this section we establish the formulas, the binding properties of the curvature of almost pseudo-K\"{a}hler Lie algebras and the contact structures obtained by central extensions.

\begin{lemma} \label{Lem-Covar}
Let $(\omega, J, h)$ be a almost (pseudo) K\"{a}hler structure on the Lie algebra $\h$ and $(\eta, \xi, \phi, g) $ be the corresponding contact metric structure on the central extension  $\g =\h\times_\omega\R$. Then the covariant derivative $\nabla$ on $\g$ expressed in terms of the covariant derivative {\rm D} on $\h$, symplectic form $\omega$ and almost complex structure $J$ on $\h$ are as follows:
$$
\nabla_X Y = {\rm D}_XY  + \frac 12 \omega(X,Y)\xi,
$$
$$
\nabla_X \xi = \nabla_\xi X = -\frac 12 JX  \mbox{ и } \nabla_\xi \xi = 0.
$$
where $X,Y \in \h$.
\end{lemma}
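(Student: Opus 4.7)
The plan is to verify the three formulas directly from the Koszul (six-term) formula for the Levi-Civita connection on a Lie group with left-invariant metric,
$$
2g(\nabla_X Y, Z) = g([X,Y]_{\g}, Z) + g([Z,X]_{\g}, Y) + g(X, [Z,Y]_{\g}),
$$
using the key structural facts recalled in the preceding sections: the brackets on the central extension are $[X,Y]_{\g} = [X,Y]_{\h} + \omega(X,Y)\xi$ and $[X,\xi]_{\g} = 0$; the metric splits as $g(X + \lambda\xi, Y + \mu\xi) = h(X,Y) + \lambda\mu$ so in particular $g(X,\xi) = 0$ and $g(\xi,\xi) = 1$; and $h$ is $J$-compatible with $\omega$, i.e. $h(X,Y) = \omega(X,JY)$ and $\omega(JX,JY) = \omega(X,Y)$.

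First I would treat the case $X, Y, Z \in \h$. Since all the $\xi$-components of $[X,Y]_{\g}$, $[Z,X]_{\g}$, $[Z,Y]_{\g}$ are killed by pairing with $Z \in \h$, the right-hand side of the Koszul formula collapses to the $h$-version of the same formula, which by definition equals $2h(D_X Y, Z)$. Pairing the proposed $\nabla_X Y = D_X Y + \tfrac12 \omega(X,Y)\xi$ against $Z \in \h$ also yields $2h(D_X Y, Z)$, so this case matches. Next, with $X, Y \in \h$ and $Z = \xi$, the last two terms on the right vanish (because $[\xi, X]_{\g} = [\xi, Y]_{\g} = 0$) and the first term reduces to $g(\omega(X,Y)\xi, \xi) = \omega(X,Y)$, while pairing the proposed $\nabla_X Y$ against $\xi$ gives exactly $\omega(X,Y)$. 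This confirms the first formula.

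For the second formula, take $Y = \xi$ and $Z \in \h$; then only the middle term survives and equals $g(\omega(Z,X)\xi, \xi) = -\omega(X,Z)$. The right-hand side of $2g(-\tfrac12 JX, Z) = -h(JX, Z)$ equals $-\omega(JX, JZ) = -\omega(X,Z)$ by $J$-compatibility, so the two agree. The case $Z = \xi$ is trivial on both sides. Torsion-freeness combined with $[X, \xi]_{\g} = 0$ then forces $\nabla_X \xi = \nabla_\xi X$, giving the second line. For $\nabla_\xi \xi$, every Koszul term vanishes because every bracket involving $\xi$ is zero, so $\nabla_\xi \xi = 0$.

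The computation is essentially mechanical; the only place subtlety enters is the pairing $g(JX, Z) = h(JX, Z)$, where one must invoke $\omega(JX, JZ) = \omega(X, Z)$ rather than simply $h(JX, Z) = -h(X, JZ)$. No use is made of $d\omega = 0$ beyond what is already built into the fact that $\g$ is a Lie algebra; the almost-Kähler hypothesis enters solely through $J$-compatibility with $\omega$ and the existence of $D$ as the Levi-Civita connection for $h$ on $\h$.
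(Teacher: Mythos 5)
Your proposal is correct and follows essentially the same route as the paper: both verify each formula by plugging the central-extension brackets $[X,Y]_{\g}=[X,Y]_{\h}+\omega(X,Y)\xi$, $[X,\xi]_{\g}=0$ into the Koszul formula and separating the $\h$- and $\xi$-components of the metric. The only cosmetic difference is in identifying $\omega(Z,X)$ with $-g(JX,Z)$: the paper writes $\omega(Z,X)=-\omega(Z,JJX)=-h(Z,JX)$ while you invoke $\omega(JX,JZ)=\omega(X,Z)$, which are equivalent manipulations.
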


\begin{proof}
Direct calculations using a six-membered formula \cite{KN}, which is for the left-invariant vector fields $X,Y,Z$ on the Lie group takes the form: $2g(\nabla_XY,Z) = g([X,Y],Z) +g([Z,X],Y) +g(X,[Z,Y])$. Take two left-invariant vector fields of the form $x = X + \lambda\xi$ and $y = Y + \mu\xi$, where $X,Y \in \h$.  Then, using the formula $[X, Y]_{\g} = [X,Y]_{\h} +\omega(X, Y)\xi$, $[X,\xi]_{\g} =0$ and orthogonality $\h$ to $\xi$, we obtain for $X,Y \in \h$:
$$
2g(\nabla_XY,Z) = h([X,Y]_{\h},Z) +h([Z, X]_{\h},Y) +h(X,[Z,Y]_{\h}) = 2h({\rm D}_XY,Z),
$$
$$
2g(\nabla_XY,\xi) = g([X,Y]_{\g}, \xi) + g[\xi, X]_{\g},Y) + g(X,[\xi,Y]_{\g}) = \omega(X,Y).
$$
Consequently, $\nabla_X Y = {\rm D}_XY  + \frac 12 \omega(X, Y)\xi$. Further, considering $h(X,Y) = \omega(X, JY)$:
\begin{multline*}
2g(\nabla_X \xi,Z) = g([X,\xi]_{\g},Z) +g[Z,X]_{\g},\xi) +g(X,[Z,\xi]_{\g}) =\omega(Z,X) = \\
=-\omega(Z,JJX) =  -h(Z,JX) = -g(JX, Z),
\end{multline*}
$$
2g(\nabla_X \xi,\xi) = g([X,\xi]_{\g},\xi) +g[\xi, X]_{\g},\xi) +g(X,[\xi,\xi]_{\g}) =0.
$$
Therefore, $\nabla_X \xi =-\frac 12 JX \in \h$. Similarly, we obtain:
$\nabla_\xi X  =-\frac 12 JX$.
\end{proof}

\begin{theorem} \label{Th-3-Riem}
Let $(\omega, J, h)$ be a almost (pseudo) K\"{a}hler structure on the Lie algebra $\h$ and $(\eta, \xi, \phi, g)$ be the corresponding contact metric structure on the central extension  $\g =\h\times_\omega\R$. Then the curvature tensor $R$ of $\g$ expressed in terms of the curvature tensor $R_{\h}$ of $\h$, symplectic form $\omega$ and almost complex structure $J$ on $\h$ are given by the formulas:
$$
R(X,Y)Z = R_{\h}(X,Y)Z -\frac 12 ({\rm D}_Z\omega)(X,Y) \xi -\frac 14 (\omega(Y, Z) JX  - \omega(X, Z) JY ) + \frac 12 \omega(X, Y)JZ,
$$
$$
R(X,Y)\xi = -\frac 12 (({\rm D}_X J)Y -({\rm D}_Y J)X),
$$
$$
R(X, \xi)Z = -\frac 12 ({\rm D}_XJ)Z -\frac 14  g(X, Z)\xi, \qquad  R(X, \xi) \xi = \frac 14  X,
$$
where $X,Y \in \h$.
\end{theorem}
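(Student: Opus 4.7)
The plan is to compute each of the four formulas by directly expanding $R(X,Y)Z = \nabla_X \nabla_Y Z - \nabla_Y \nabla_X Z - \nabla_{[X,Y]_\g} Z$, substituting the covariant derivatives supplied by Lemma \ref{Lem-Covar} together with the bracket $[X,Y]_\g = [X,Y]_\h + \omega(X,Y)\xi$. Throughout I will use $J^2 = -I$ on $\h$, the torsion-freeness of ${\rm D}$ (so that ${\rm D}_XY - {\rm D}_YX = [X,Y]_\h$), and the identity $({\rm D}_XJ)Y = {\rm D}_X(JY) - J({\rm D}_XY)$.

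The three formulas involving $\xi$ fall out almost mechanically. For $R(X,\xi)\xi$, since $\nabla_\xi\xi = 0$ and $[X,\xi]_\g = 0$, only $-\nabla_\xi\nabla_X\xi = \frac{1}{2}\nabla_\xi(JX) = -\frac{1}{4}J^2X = \frac{1}{4}X$ survives. For $R(X,\xi)Z$, collecting the $\h$-part gives $-\frac{1}{2}({\rm D}_X(JZ) - J({\rm D}_XZ)) = -\frac{1}{2}({\rm D}_XJ)Z$, while the $\xi$-part equals $-\frac{1}{4}\omega(X,JZ)\xi = -\frac{1}{4}g(X,Z)\xi$ by the associated-metric identity. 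For $R(X,Y)\xi$, the $\xi$-part is proportional to $h(X,Y)-h(Y,X)$ and hence vanishes by symmetry of $h$, while the $\h$-part reorganises, via torsion-freeness, into $-\frac{1}{2}(({\rm D}_XJ)Y - ({\rm D}_YJ)X)$.

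The main work is $R(X,Y)Z$ for $X,Y,Z \in \h$. The $\h$-component of the expansion produces $R_\h(X,Y)Z$ together with three correction terms: $-\frac{1}{4}\omega(Y,Z)JX$ and $+\frac{1}{4}\omega(X,Z)JY$ come from the outer $\nabla$ acting on the $\xi$-correction of the inner covariant derivative, while $+\frac{1}{2}\omega(X,Y)JZ$ comes from $\nabla_{\omega(X,Y)\xi}Z = \omega(X,Y)\nabla_\xi Z$. The $\xi$-component collects to $\frac{1}{2}[\omega(X, {\rm D}_YZ) - \omega(Y, {\rm D}_XZ) - \omega([X,Y]_\h, Z)]\xi$.

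The one non-routine step is recognising this $\xi$-coefficient as $-\frac{1}{2}({\rm D}_Z\omega)(X,Y)$. Since left-invariant scalar functions are constant, $({\rm D}_Z\omega)(X,Y) = -\omega({\rm D}_ZX, Y) - \omega(X, {\rm D}_ZY)$; combining this with the torsion-free identities ${\rm D}_YZ - {\rm D}_ZY = [Y,Z]_\h$ and ${\rm D}_XZ - {\rm D}_ZX = [X,Z]_\h$, the required equality reduces to the Cartan identity $d\omega(X,Y,Z) = -\omega([X,Y]_\h, Z) + \omega([X,Z]_\h, Y) - \omega([Y,Z]_\h, X) = 0$, which holds because $\omega$ is closed. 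This closedness of the symplectic form is the only place where symplecticness enters substantively; everything else is algebraic bookkeeping.
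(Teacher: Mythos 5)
Your proposal is correct and follows essentially the same route as the paper's own proof: a direct expansion of $R$ using the covariant derivative formulas of Lemma \ref{Lem-Covar}, with the $\xi$-coefficient of $R(X,Y)Z$ identified as $-\tfrac12({\rm D}_Z\omega)(X,Y)$ by combining torsion-freeness of ${\rm D}$, the constancy of $\omega(X,Y)$ on left-invariant fields, and the closedness of $\omega$. The treatment of the three cases involving $\xi$ likewise matches the paper's computation step for step.
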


\begin{proof}
This consists of direct calculation using the formula $R(X,Y)Z = \nabla_X \nabla_YZ -\nabla_Y\nabla_XZ - \nabla_{[X,Y]}Z$ using formulas for covariant derivatives obtained in Lemma \ref{Lem-Covar}. For example, let us take first the left-invariant vector field of the form $X,Y,Z \in \h$. Then, using the formula  $[X,Y]_{\g} =[X,Y]_{\h} +\omega(X,Y)\xi$, $[X,\xi]_{\g} =0$, orthogonality $\h$ to $\xi$, and $\nabla_X Y = {\rm D}_XY  +\frac 12 \omega(X,Y)\xi$, we obtain:
\begin{multline*}
R(X,Y)Z = \nabla_X \nabla_YZ -\nabla_Y\nabla_XZ -\nabla_{[X,Y]}Z = \\
= \nabla_X \{{\rm D}_YZ + \frac 12 \omega(Y,Z)\xi\} -\nabla_Y\{{\rm D}_XZ +\frac 12  \omega(X, Z)\xi\} -\nabla_{[X,Y]_{\h}+ \omega(X,Y)\xi} Z =\\
= {\rm D}_X{\rm D}_YZ + \frac 12 \omega(X,{\rm D}_YZ) \xi + \frac 12 \omega(Y, Z)(-\frac 12 JX) -\{{\rm D}_Y{\rm D}_XZ +\frac 12 \omega(Y,{\rm D}_XZ)\xi +\frac 12 \omega(X,Z)(-\frac 12 JY)\} -\\
 -\{{\rm D}_{[X,Y]} Z +\frac 12 \omega([X,Y],Z)\xi\} -\omega(X,Y)\nabla_\xi Z  =  R_{\h}(X,Y)Z + \\
+ \frac 12 \{\omega(X,{\rm D}_YZ) -\omega(Y,{\rm D}_XZ) -\omega([X,Y],Z)\}\xi - \frac 14 \{\omega(Y, Z) JX - \omega(X, Z)JY\} +\frac 12 \omega(X, Y)JZ =
\end{multline*}
To simplify $\omega(X,{\rm D}_YZ) -\omega(Y,{\rm D}_XZ) -\omega([X,Y], Z)$ in the last expression, we use the properties ${\rm D}_XZ = {\rm D}_ZX +[X,Z]$ and ${\rm D}_YZ = {\rm D}_ZY +[Y,Z]$. Then, considering the closedness of the form $\omega$, $\omega(X,[Y,Z]) + \omega(Y,[Z,X]) + \omega( Z,[X,Y]) = d\omega(X,Y,Z) = 0$ and equality $Z\omega(X, Y) = ({\rm D}_Z\omega)(X,Y) + \omega({\rm D}_ZX,Y) + \omega(X,{\rm D}_ZY) = 0$, we obtain:
\begin{multline*}
	\omega(X, {\rm D}_YZ) -\omega(Y, {\rm D}_XZ) -\omega([X,Y],Z) = \\
	= \omega(X,{\rm D}_ZY +[Y,Z]) -\omega(Y,{\rm D}_ZX + [X,Z]) -\omega([X,Y],Z) = \\
	= \omega(X,{\rm D}_ZY) -\omega(Y,{\rm D}_ZX) +\omega(X,[Y,Z]) -\omega(Y,[X,Z]) -\omega([X, Y], Z) = \\
	= \omega({\rm D}_ZX, Y) + \omega(X,{\rm D}_ZY) +\omega(X,[Y,Z]) + \omega(Y,[Z,X]) + \omega(Z,[X,Y]) = -({\rm D}_Z\omega)(X,Y).
\end{multline*}
Substituting this expression, we complete the calculation:
$$
=R_{\h}(X,Y)Z -\frac 12 ({\rm D}_Z\omega)(X,Y)\xi -\frac 14 (\omega(Y,Z)JX  -\omega(X,Z)JY) + \frac 12 \omega(X,Y)JZ.
$$
Consider the case of $R(X,Y)\xi$, where $X,Y \in \h$. Then, considering equalities $\nabla_X \xi = -\frac 12 JX$ and $\nabla_\xi \xi = 0$ we get:
\begin{multline*}
R(X,Y)\xi = \nabla_X (\nabla_Y \xi) -\nabla_Y (\nabla_X \xi) -\nabla_{[X,Y]} \xi = -\nabla_X (\frac 12 JY) + \nabla_Y (\frac 12 JX)  -\nabla_{[X,Y]_{\h}+ \omega(X, Y)\xi} \xi =\\
= \frac 12 (-\nabla_X (JY) + \nabla_Y (JX)  +J[X, Y]) = \\
= \frac 12 (-{{\rm D}_X JY -\frac 12 \omega(X, JY)\xi} +{{\rm D}_Y JX +\frac 12 \omega(Y, JX)\xi } + J[X, Y]) =\\
= \frac 12 ( -({\rm D}_X J)Y -J({\rm D}_XY) -\frac 12 \omega(X, JY)\xi +
({\rm D}_Y J)X +J({\rm D}_YX) +\frac 12 \omega(Y, JX)\xi +J[X, Y] ) =\\
= \frac 12(-({\rm D}_X J)Y + ({\rm D}_Y J)X -J({\rm D}_XY-{\rm D}_YX) -\frac 12 \omega(X, JY)\xi +\frac 12 \omega(Y, JX)\xi + J[X, Y]) =\\
= \frac 12 (-({\rm D}_X J)Y +({\rm D}_Y J)X -J([X,Y]) -\frac 12 g(X,Y)\xi +\frac 12 g(Y,X)\xi +J[X, Y]) = \\
 = -\frac 12  (({\rm D}_X J)Y -({\rm D}_YJ)X).
\end{multline*}
Now consider the case $R(X, \xi)Y$, where $X,Y \in \h$. Similarly, we obtain:
\begin{multline*}
R(X, \xi)Z = \nabla_X(\nabla_\xi Z) -\nabla_\xi(\nabla_XZ) - \nabla_{[X, \xi]} Z = \\
= -\nabla_X (\frac 12 JZ) -\nabla_\xi ({\rm D}_XZ + \frac 12 \omega(X, Z)\xi) = -\nabla_X (\frac 12 JZ) + \frac 12 J({\rm D}_XZ) = \\
= \frac 12 (-{{\rm D}_XJZ -\frac 12 \omega(X, JZ)\xi} +J({\rm D}_XZ)) = \frac 12 (-({\rm D}_XJ)Z - J({\rm D}_XZ) - \frac 12 \omega(X,JZ)\xi +J({\rm D}_XZ)) = \\
= -\frac 12 ({\rm D}_XJ)Z  -\frac 14 \omega(X,JZ)\xi = -\frac 12 ({\rm D}_XJ)Z  -\frac 14 g(X, Z)\xi.
\end{multline*}
Similarly, established the last formula,
$$
R(X, \xi) \xi = \nabla_X(\nabla_\xi \xi) -\nabla_\xi(\nabla_X \xi) -\nabla_{[X, \xi]}\xi =  -\nabla_\xi(-\frac 12 JX) = \frac 12 \nabla_\xi (JX) = -\frac 12 \frac 12 J(JX)  = \frac 14 X.
$$
\end{proof}

In the case of a (pseudo) K\"{a}hler structure on the Lie algebra $\h$ we have $D\omega = 0$ and $DJ = 0$. Therefore, the formulas for the curvature will have a simpler form. If additionally submit an expression of $\omega(Y,Z)JX  -\omega(X,Z)JY$ as $h(Z,JY)JX -h(Z,JX)JY$, we get:

\begin{corollary} \label{Cor-3}
Let $(\omega, J, h)$ be a (pseudo) K\"{a}hler complex structure on the Lie algebra $\h$ and $(\eta, \xi, \phi, g)$ be the corresponding (pseudo) Sasakian structure at the central expansion $\g =\h\times_\omega\R$. Then the curvature tensor $R$ on $\g$ is expressed in terms of the curvature tensor $R_{\h}$ on $\h$, the symplectic form $\omega$  and an almost complex structure $J$ on $\h$ as follows:
$$
R(X,Y)Z = R_{\h}(X,Y)Z -\frac 14 (h(Z,JY)JX -h(Z,JX)JY) +\frac 12 \omega(X,Y)JZ,
$$
$$
R(X,Y)\xi =0, \qquad R(X,\xi)Z =-\frac 14 g(X,Z)\xi, \qquad R(X,\xi)\xi =\frac 14 X,
$$
where $X,Y \in \h$.
\end{corollary}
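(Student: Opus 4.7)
The plan is to obtain the corollary as a direct specialization of Theorem \ref{Th-3-Riem} to the (pseudo) K\"ahler setting. First, I would invoke the standard fact that a (pseudo) K\"ahler structure on $(\h, J, h)$ is characterized by $DJ = 0$, equivalently $D\omega = 0$ (see \cite{KN}); integrability of $J$ is already subsumed in the hypothesis that the structure is (pseudo) K\"ahler. These two parallelism identities, $({\rm D}_Z\omega)(X,Y) \equiv 0$ and $({\rm D}_X J)Y \equiv 0$ for all $X,Y,Z \in \h$, are the only new inputs relative to Theorem \ref{Th-3-Riem}.

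Next, I would substitute these identities into each of the four curvature formulas of Theorem \ref{Th-3-Riem}. In the expression for $R(X,Y)Z$, the term $-\tfrac{1}{2}({\rm D}_Z\omega)(X,Y)\xi$ vanishes, leaving only $R_{\h}(X,Y)Z$, the sectional-curvature-type correction $-\tfrac{1}{4}(\omega(Y,Z)JX - \omega(X,Z)JY)$, and the term $\tfrac{1}{2}\omega(X,Y)JZ$. The formula $R(X,Y)\xi = -\tfrac{1}{2}(({\rm D}_XJ)Y - ({\rm D}_YJ)X)$ vanishes identically. In $R(X,\xi)Z$ the leading term $-\tfrac{1}{2}({\rm D}_XJ)Z$ disappears, leaving only $-\tfrac{1}{4}g(X,Z)\xi$. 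The formula $R(X,\xi)\xi = \tfrac{1}{4}X$ requires no simplification, since it was derived in Theorem \ref{Th-3-Riem} without appealing to any K\"ahler hypothesis.

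The only remaining task is a cosmetic rewrite to bring $R(X,Y)Z$ to the form stated. Using $h(U,V) = \omega(U,JV)$ together with $J^2 = -I$ on $\h$, one computes $h(Z,JY) = \omega(Z, J^2Y) = -\omega(Z,Y) = \omega(Y,Z)$ and similarly $h(Z,JX) = \omega(X,Z)$, so that $\omega(Y,Z)JX - \omega(X,Z)JY = h(Z,JY)JX - h(Z,JX)JY$. Inserting this identity yields the corollary in precisely the stated form. No step presents a genuine obstacle: once Theorem \ref{Th-3-Riem} is in hand, the corollary reduces to the substitution of two parallelism identities plus one elementary algebraic relation among $h$, $\omega$, and $J$.
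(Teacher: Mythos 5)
Your proposal is correct and coincides with the paper's own derivation: the corollary is obtained by substituting $D\omega = 0$ and $DJ = 0$ into the four formulas of Theorem \ref{Th-3-Riem} and then rewriting $\omega(Y,Z)JX - \omega(X,Z)JY$ as $h(Z,JY)JX - h(Z,JX)JY$ via $h(U,V) = \omega(U,JV)$ and $J^2 = -I$. Nothing is missing.
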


\subsection{Ricci tensor} \label{Formuly-Ricci}
Recall that the Ricci tensor $Ric$ in a pseudo-Riemannian case is defined by the formula:
$$
Ric(X,Y) = \sum_{i=1}^{2n+1} \varepsilon_i g(R(e_i,Y)Z, e_i),
$$
where $\{e_i\}$ is the orthonormal basis on $\g$ and $\varepsilon_i =g(e_i, e_i)$. We choose a basis for $\g$ in the form $\{e_1, \dots , e_{2n}, e_{2n+1}\} =\{E_1, \dots, E_{2n}, \xi \}$, where $E_i \in \h$ and $\xi$ is the Reeb field. The following calculations assume that the index $i$ changes from 1 to $2n+1$, and the index $j$ changes from 1 to $2n$. In addition, we consider that an almost complex structure on $\h$ is integrable, so that $\h$ is the (pseudo) K\"{a}hler Lie algebra.

\begin{theorem} \label{Th-4}
Let $(\omega, J, h)$ be (pseudo) K\"{a}hler structure on the Lie algebra $\h$, and $(\eta, \xi, \phi, g)$ corresponds to a contact (pseudo) metric Sasaki structure on a central expansion $\g =\h\times_\omega\R$. Then the Ricci tensor on $\g$ expressed by the Ricci tensor $Ric_{\h}$ on $\h$ forms $\omega$ and an almost complex structure $J$ on $\h$ as follows:
$$
Ric(Y,Z) = Ric_{\h}(Y,Z) -\frac 12 h(Y,Z),
$$
$$
Ric(Y, \xi) = 0,\qquad Ric(\xi,\xi) = n/2.
$$
where $X,Y \in \h$.
\end{theorem}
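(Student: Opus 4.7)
The strategy is to substitute the curvature formulas from Corollary \ref{Cor-3} (valid in the integrable/Kähler case) into the defining contraction
\[
Ric(Y,Z) \;=\; \sum_{i=1}^{2n+1}\varepsilon_i\, g\bigl(R(e_i,Y)Z,\,e_i\bigr),
\]
using the adapted basis $\{e_1,\ldots,e_{2n+1}\}=\{E_1,\ldots,E_{2n},\xi\}$ in which $\{E_j\}$ is $h$-orthonormal on $\h$ (with $\varepsilon_j=h(E_j,E_j)=\pm 1$) and $g(\xi,\xi)=1$. The sum splits naturally into an $\h$-part ($i=1,\ldots,2n$) and a single $\xi$-contribution. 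The Kähler hypothesis is used only through two identities: the antisymmetry of $J$ with respect to $h$, $h(JA,B)=-h(A,JB)$, together with $\omega(A,B)=h(JA,B)=-h(A,JB)$; and the completeness relation $A=\sum_j\varepsilon_j h(A,E_j)E_j$ for every $A\in\h$, which implies $\sum_j\varepsilon_j h(A,E_j)h(E_j,B)=h(A,B)$.

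For $Y,Z\in\h$ I would plug in the formula $R(X,Y)Z=R_\h(X,Y)Z-\tfrac14\bigl(h(Z,JY)JX-h(Z,JX)JY\bigr)+\tfrac12\omega(X,Y)JZ$ with $X=E_j$. The term $R_\h$ immediately gives $Ric_\h(Y,Z)$. The first $J$-term produces $-\tfrac14 h(Z,JY)\sum_j\varepsilon_j h(JE_j,E_j)$ which vanishes by antisymmetry, plus $+\tfrac14\sum_j\varepsilon_j h(Z,JE_j)h(JY,E_j)$; rewriting $h(JY,E_j)=-h(Y,JE_j)$ and applying completeness to the orthonormal basis $\{JE_j\}$ (which still has signature $\varepsilon_j$ because $J$ is an isometry of $h$) reduces this to $-\tfrac14 h(Y,Z)$. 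The $\omega$-term yields $\tfrac12\sum_j\varepsilon_j h(JE_j,Y)h(JZ,E_j)$; applying the same two identities collapses it to $\tfrac12 h(Z,J(JY))=-\tfrac12 h(Y,Z)$. Finally the $\xi$-contribution uses $R(\xi,Y)Z=-R(Y,\xi)Z=\tfrac14 g(Y,Z)\xi$, contributing $+\tfrac14 h(Y,Z)$. Summing gives $Ric_\h(Y,Z)-\tfrac14 h(Y,Z)-\tfrac12 h(Y,Z)+\tfrac14 h(Y,Z)=Ric_\h(Y,Z)-\tfrac12 h(Y,Z)$, as claimed.

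The other two cases are much shorter. For $Ric(Y,\xi)$ the $\h$-sum vanishes because $R(E_j,Y)\xi=0$ (Corollary \ref{Cor-3}), and the remaining term is $g(R(\xi,Y)\xi,\xi)=g(-\tfrac14 Y,\xi)=0$ since $Y\in\h\perp\xi$. For $Ric(\xi,\xi)$ only the terms $R(E_j,\xi)\xi=\tfrac14 E_j$ survive, so $Ric(\xi,\xi)=\tfrac14\sum_j\varepsilon_j\,g(E_j,E_j)=\tfrac14\sum_j\varepsilon_j^2=\tfrac14(2n)=n/2$.

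The main obstacle is purely bookkeeping: correctly tracking the interplay between the symmetry $h(A,B)=h(B,A)$, the antisymmetry $h(JA,B)=-h(A,JB)$, and the sign conventions $\omega(A,B)=h(JA,B)=-h(A,JB)$ when reducing the two index contractions that appear in the $\h$-part. One must also verify that $\{JE_j\}$ carries the same signs $\varepsilon_j$ as $\{E_j\}$ (which is automatic because $J$ preserves $h$) so that the completeness relation can be applied directly to it; once this is in place, every contraction collapses by a single application of completeness and no delicate cancellation is required.
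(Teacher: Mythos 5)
Your proposal is correct and follows essentially the same route as the paper: substitute the curvature formulas of Corollary \ref{Cor-3} into the defining contraction over the adapted basis $\{E_1,\dots,E_{2n},\xi\}$, kill the $\sum_j\varepsilon_j h(JE_j,E_j)$ term by antisymmetry, collapse the remaining contractions via completeness and the $h$-isometry of $J$ to get $-\tfrac14 h -\tfrac12 h +\tfrac14 h=-\tfrac12 h$, and handle the $\xi$-directions directly. The bookkeeping and the final tallies match the paper's computation exactly.
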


\begin{proof}
In the basis $\{e_1, \dots , e_{2n}, e_{2n+1}\} = \{E_1, \dots, E_{2n}, \xi \}$ of the algebra $\g$, where $E_j \in \h$, for the $Y,Z \in \h$ we obtain:
\begin{multline*}
Ric(Y,Z) = \sum_i \varepsilon_i g(R(e_i,Y)Z, e_i) = \\
= \sum_j \left(\varepsilon_j h(R_{\h}(E_j,Y)Z, E_j) - \frac 14 \varepsilon_i h(\omega(Y, Z)JE_j -\omega(E_j, Z) JY, E_j) + \frac 12  \varepsilon_j \omega(E_j,Y)h(E_j,JZ)\right) + \\
+g(R(\xi,Y)Z, \xi) =\\
= Ric_{\h}(Y,Z) -\frac 14 \sum_j \varepsilon_j \omega(Y, Z)h(JE_j,E_j) +\frac 14 \sum_j \varepsilon_j \omega(E_j,Z) h(JY,E_j) +\\
 + \frac 12  \sum_j \varepsilon_j \omega(E_j,-JJY)g(E_j,JZ) + g(\frac 14  g(Y, Z)\xi,\xi)  = \\
= Ric_{\h}(Y,Z) + \frac 14  \sum_j \varepsilon_j \omega(E_j, -JJZ)h(JY,E_j) +\frac 14  h(Y, Z) -\frac 12  \sum_j \varepsilon_j g(E_j,JY)g(E_j,JZ) =\\
 = Ric_{\h}(Y,Z) - \frac 14 \sum_j \varepsilon_j h(E_j, JZ) h(E_j, JY) + \frac 14  h(Y, Z) - \frac 12  \sum_j \varepsilon_j g(E_j, JY)g(E_j,JZ) = \\
= Ric_{\h}(Y,Z) -\frac 14 h(JY, JZ) + \frac 14  h(Y, Z) -\frac 12  h(JY, JZ) =  Ric_{\h}(Y,Z) -\frac 12  h(Y, Z).
\end{multline*}
Further,
$$
Ric(Y, \xi) = \sum_i g(R(e_i,Y)\xi, e_i) = \sum_j \varepsilon_j g(R(E_j,Y)\xi, E_j) + g(R(\xi,Y)\xi, \xi) = -\frac 14  g(Y, \xi) = 0.
$$
\begin{multline*}
Ric(\xi, \xi) = \sum_i \varepsilon_i  g(R(e_i, \xi)\xi, e_i) = \sum_j \varepsilon_j  g(R(E_j, \xi)\xi, E_j) + g(R(\xi, \xi) \xi, \xi) = \\
=\frac 14  \sum_j \varepsilon_j g(E_j, E_j) = \frac 14 \sum_j \varepsilon_j  \varepsilon_j  = n/2.
\end{multline*}
\end{proof}

In \cite{Smolen-11}, the invariant pseudo-K\"{a}hler structures on six-dimensional nilpotent Lie algebra with the symplectic structure of the classification list  \cite{Goze-Khakim-Med} are studied in detail. For each of them there are many compatible complex structures and corresponding pseudo-Riemannian metrics. It was found that they all have common properties: the associated pseudo-Kahler metric is Ricci-flat, the Riemann tensor has zero pseudo-norm, and the Riemann tensor has several nonzero components which depend only on two or, at most, three parameters. The author of \cite{Smolen-11} found a pseudo-K\"{a}hler structure depending only on the parameters that have an impact on curvature. Such metrics are called canonical. The curvature properties of the almost pseudo-K\"{a}hler structures on six nilpotent Lie algebra are obtained in \cite{Smolen-13}. Each (almost) pseudo-K\"{a}hler structure on a six-dimensional nilpotent Lie algebra determines the pseudo-Riemannian ($K$-contact) Sasaki structure on the seven-dimensional nilpotent Lie algebra. The formulas of Theorems \ref{Th-3-Riem} and \ref{Th-4} allow the use of the properties of the pseudo-K\"{a}hler structures for the properties of the corresponding contact ($K$-contact) Sasaki structures. This is demonstrated by the following example.

\textbf{Example}\\
Consider the Lie algebra $\g_{14}$ with the commutators $[e_1,e_2] =e_4, [e_2,e_3] =e_6, [e_2,e_4] =e_5$, $[e_1,e_6] = -e_7, [e_2,e_5] = e_7, [e_3,e_4] = e_7$ and contact form $\eta = e^7$.
It is central extension of the algebra $\h_{14}$, $[e_1,e_2] =e_4, [e_2,e_3] =e_6, [e_2,e_4] = e_5$, using the symplectic form $\omega = -e^1\wedge e^6 + e^2\wedge e^5 + e^3\wedge e^4$. As noted in the example in the previous section, there is \cite{Smolen-11} a family of pseudo-K\"{a}hler metrics on $\h_{14}$ which depend on six parameters $\psi_{ij}$. The curvature tensor on $\h_{14}$ is zero for all values of the parameters $\psi_{ij}$. Therefore, in this case, we obtain a family of contact pseudo-Sasaki structures $(\eta, \xi, \phi, g)$ on $\g_{14}$, where $\xi = e_7$ and $\phi(x) = JX$, if $x = X + \lambda\xi$, $X \in \h_{14}$. The curvature tensor and Ricci tensor have the form:
$$
R(X,Y)Z = -\frac 14 (g(Z,JY)JX -g(Z,JX)JY) +\frac 12 \omega(X,Y)JZ,
$$
$$
R(X,Y)\xi =0, \quad R(X,\xi) Z = -\frac 14 g(X,Z)\xi,  \quad R(X,\xi)\xi =\frac 14 X,
$$
$$
Ric(Y,Z) = -\frac 12 g(Y,Z), \quad Ric(Y,\xi) = 0,\quad  Ric(\xi,\xi) =n/2.
$$

\textbf{Remark.}
Using the results of \cite{Smolen-11} it is easy to build a family of contact pseudo-Riemannian Sasaki structures and other contact structures from Theorem \ref{Th-1}.
	
In \cite{Goze-Remm} it is shown that the contact nilpotent Lie algebra of dimension $2n+1$ is a central extension of the $2n$-dimensional symplectic nilpotent Lie algebra. Therefore, contact nilpotent Lie algebras in the tables of Theorems \ref{Th-1} and \ref{Th-2} with the Heisenberg algebra give a full list of all contact seven-dimensional nilpotent Lie algebras. Another classification of seven-dimensional contact nilpotent Lie algebras is introduced in \cite{Kutsak}. In contrast to the symplectic case, this paper shows that the Ricci tensor of the contact seven-dimensional Lie algebras is nonzero even in the direction of contact distribution $D$.

\end{document}